\numberwithin{theorem}{section}
\crefname{remark}{Remark}{Remarks}
\crefname{assumption}{Assumption}{Assumptions}
\title{Additive Schwarz methods for fourth-order variational inequalities\thanks{Submitted to arXiv.
\funding{This work was supported by Basic Science Research Program through NRF funded by the Ministry of Education~(No.2019R1A6A1A10073887).}
}}
\author{Jongho Park\thanks{Computer, Electrical and Mathematical Sciences and Engineering Division, King Abdullah University of Science and Technology~(KAUST), Thuwal 23955, Saudi Arabia 
 (\email{jongho.park@kaust.edu.sa}, \url{https://sites.google.com/view/jonghopark}).}}
\newcommand\gap{\hspace{0.1cm}}
\newcommand\smallgap{\hspace{0.05cm}}
\newcommand\cI{\mathcal{I}}
\newcommand\cT{\mathcal{T}}
\newcommand\cV{\mathcal{V}}
\newcommand\rT{\mathrm{T}}
\newcommand\tS{\widetilde{S}}
\newcommand\un{u^{(n)}}
\newcommand\unn{u^{(n+1)}}
\newcommand\intO{\int_{\Omega}}
\newcommand\sumk{\sum_{k=1}^N}
\newcommand\sumT{\sum_{T \in \mathcal{T}_h}}
\newcommand\Rw{R_k^* w_k}
\DeclareMathOperator*{\argmin}{\arg\min}
\DeclareMathOperator{\diam}{diam}
\begin{document}

\maketitle

\begin{abstract}
Fourth-order variational inequalities are encountered in various scientific and engineering disciplines, including elliptic optimal control problems and plate obstacle problems.
In this paper, we consider additive Schwarz methods for solving fourth-order variational inequalities.
Based on a unified framework of various finite element methods for fourth-order variational inequalities, we develop one- and two-level additive Schwarz methods.
We prove that the two-level method is scalable in the sense that the convergence rate of the method depends on $H/h$ and $H/\delta$ only, where $h$ and $H$ are the typical diameters of an element and a subdomain, respectively, and $\delta$ measures the overlap among the subdomains.
This proof relies on a new nonlinear positivity-preserving coarse interpolation operator, the construction of which was previously unknown.
To the best of our knowledge, this analysis represents the first investigation into the scalability of the two-level additive Schwarz method for fourth-order variational inequalities.
Our theoretical results are verified by numerical experiments.
\end{abstract}

\begin{keywords}
Additive Schwarz method,
Variational inequality,
Fourth-order problem,
Two-level method,
Convergence analysis
\end{keywords}

\begin{AMS}
65N55, 65K15, 65N30, 49M27
\end{AMS}

\section{Introduction}
\label{Sec:Introduction}
This paper is concerned with scalable parallel algorithms to solve fourth-order variational problems.
Let $\Omega \subset \mathbb{R}^2$ be a convex polygonal domain and $S$ be a subspace of $H^2 (\Omega)$ with a boundary condition that makes the Poincar\'{e}--Friedrichs inequality hold~\cite{Necas:2012}.
As a model problem, we consider the constrained optimization problem
\begin{equation}
\label{model_opt}
\min_{v \in K} \left\{ \frac{1}{2} a(v,v) - (f,v) \right\},
\end{equation}
where $a ( \cdot, \cdot)$ is a continuous and coercive bilinear form on $S$ derived from a fourth-order elliptic problem, $(\cdot, \cdot)$ is the standard $L^2 (\Omega)$-inner product, and the constraint set $K$ is given by
\begin{equation*}
K = \left\{ v \in S : v \leq \psi \textrm{ in } \Omega \right\}
\end{equation*}
for some $\psi \in C (\overline{\Omega}) \cap C^2 (\Omega)$.
It is well-known that~\eqref{model_opt} admits an equivalent fourth-order variational inequality formulation~\cite{BS:2017,BSZ:2012}: find $u \in K$ such that
\begin{equation}
\label{model_VI}
a(u, v - u) \geq (f, v - u) \quad \forall v \in K.
\end{equation}
Since~\eqref{model_opt} and~\eqref{model_VI} are equivalent, we refer to the optimization problem~\eqref{model_opt} as a fourth-order variational inequality as well throughout this paper.

Problems of the form either~\eqref{model_opt} or~\eqref{model_VI} appear in diverse fields of science and engineering.
For example, if we set
\begin{equation}
\label{PL}
S = H_0^2 (\Omega), \quad
a(v, w) = \intO \nabla^2 v : \nabla^2 w \,dx,
\end{equation}
where $\nabla^2$ and $:$ denote the Hessian and the Frobenius inner product, respectively, then we obtain the displacement obstacle problem of clamped plates~\cite{BDS:2018,BSW:2022,BSZ:2012}.
On the other hand, if we set
\begin{equation}
\label{OC}
S = H^2 (\Omega) \cap H_0^1 (\Omega), \quad
a(v, w) = \intO (\beta \nabla^2 v : \nabla^2 w + v w) \,dx 
\end{equation}
for some $\beta > 0$, then we get the elliptic distributed optimal control problem with pointwise constraints introduced in~\cite{BS:2017,Casas:1986,LGY:2009}.
Various finite element discretizations for either~\eqref{model_opt} or~\eqref{model_VI} have been considered in a number of existing works: nonconforming methods~\cite{LGY:2009}, mixed methods~\cite{GY:2011}, partition of unity methods~\cite{BDS:2014}, discontinuous Galerkin methods~\cite{BGS:2018,CZ:2019}, and virtual element methods~\cite{BST:2021}.
In particular, unified frameworks for finite element methods including $C^1$ conforming methods, nonconforming methods, and interior penalty methods were proposed in~\cite{BS:2017,BSZ:2012}.

We consider domain decomposition methods~\cite{TW:2005} as parallel numerical solvers for the fourth-order variational inequality~\eqref{model_opt}.
Our motivation stems from the literature on domain decomposition methods for optimization problems of the form~\eqref{model_opt} and variational inequalities of the form~\eqref{model_VI}.
Schwarz methods for second-order variational inequalities of the form~\eqref{model_VI} were studied in~\cite{BTW:2003,BW:2000,Carstensen:1997,Tai:2003}, and subsequently generalized to variational inequalities of the second kind and quasi-variational inequalities in~\cite{BK:2012}.
A dual-primal nonoverlapping domain decomposition method for variational inequalities appearing in structural mechanics was proposed in~\cite{LP:2021}.
Meanwhile, convergence theory of Schwarz methods for smooth convex optimization problems was developed in~\cite{TX:2002}, and then extended to constrained and nonsmooth problems in~\cite{Badea:2006} and~\cite{Park:2020}, respectively.

In this paper, we present and analyze additive Schwarz methods for solving the fourth-order variational inequality~\eqref{model_opt}.
In contrast to several existing works~\cite{BDS:2018,BSW:2022} focusing on auxiliary linear problems within the primal-dual active set method~\cite{BIK:1999,HIK:2002} for~\eqref{model_opt} and~\eqref{model_VI}, the methods considered here are based on the nonlinear subspace correction framework for convex optimization problems as presented in~\cite{Badea:2006,LP:2022,Park:2020,TX:2002}.
That is, each subdomain problem in the proposed methods is nonlinear and has the same form as the full-dimensional problem~\eqref{model_opt}.
This approach leads to globally linear convergent algorithms, contrasting with the fast but locally convergent nature of the primal-dual active set method.

Based on a unified framework~\cite{BS:2017} of various finite element methods for~\eqref{model_opt} including $C^1$ finite element methods~\cite{Zhang:1996}, nonconforming finite element methods~\cite{Brenner:1996}, and interior penalty methods~\cite{BW:2005}, we investigate a one-level additive Schwarz method and prove that its additive Schwarz condition number~\cite{Park:2020} is $O(1/ H \delta^3)$, where $H$ denotes the subdomain diameter and $\delta$ measures the overlap among the subdomains.
This estimate is the same as those of one-level additive Schwarz methods for the auxiliary linear problems considered in~\cite{BDS:2018,BSW:2022}.
In addition, inspired by the partition of unity method for fourth-order elliptic problems~\cite{BDS:2014,BDS:2018}, we introduce a novel coarse space suitable for the constrained problem~\eqref{model_opt}.
We show that the additive Schwarz condition number of a two-level additive Schwarz method equipped with the proposed coarse space is bounded by $O( C(H,h) (H/\delta)^3 )$, where $h$ is the diameter of fine elements and $C (H,h)$ a positive constant depending on $H/h$ only.
Consequently, the two-level method exhibit scalability in the sense that its global linear convergence rate depends on $H/h$ and $H/\delta$ only.
In the convergence analysis of the two-level method, we rely on a novel nonlinear positivity-preserving coarse interpolation operator.
Although the construction of such an operator for second-order variational inequalities has been successfully addressed in various existing works~\cite{Badea:2006,BTW:2003,Tai:2003}, extending this construction to fourth-order problems like~\eqref{model_opt} has been regarded as a challenging task~(see, e.g.,~\cite{NW:2001,Tai:2005}). 
Our construction provides, to the best of our knowledge, the first solution to this previously unsolved problem.

This paper is organized as follows.
In Section~\ref{Sec:Pre}, we introduce finite element discretizations and additive Schwarz methods for the fourth-order variational inequality~\eqref{model_opt}.
In Section~\ref{Sec:1L}, we present a one-level additive Schwarz method for~\eqref{model_opt}.
In Section~\ref{Sec:2L}, we propose a novel partition of unity coarse space for~\eqref{model_opt} and analyze a two-level additive Schwarz method equipped with the proposed coarse space.
In Section~\ref{Sec:Solvers}, we discuss local and coarse problems in the additive Schwarz methods. 
In Section~\ref{Sec:Numerical}. we present numerical results that support our theoretical results.
In Section~\ref{Sec:J_i}, we present a proof of a key lemma~(see Lemma~\ref{Lem:J_i}) for the construction of the nonlinear positivity-preserving coarse interpolation operator.
We conclude this paper with remarks in Section~\ref{Sec:Conclusion}.

\section{Preliminaries}
\label{Sec:Pre}
In this section, we provide preliminaries for this paper.
We introduce finite element discretizations of the fourth-order variational inequality~\eqref{model_opt}.
Then, we present a general additive Schwarz method for the fourth-order variational inequality based on an abstract space decomposition.

In what follows, we use the notation $A \lesssim B$ and $B \gtrsim A$ to represent that there exists a constant $c > 0$ such that $A \leq c B$, where $c$ is independent of the parameters $h$, $H$, and $\delta$ relying on discretization and domain decomposition.
The notation $A \approx B$ means that $A \lesssim B$ and $A \gtrsim B$.

\subsection{Finite element discretizations}
We present a unified framework of finite element methods for the fourth-order variational inequality~\eqref{model_opt}, where similar frameworks were introduced in~\cite{BS:2017,BSZ:2012}.
This framework can deal with various finite element methods for fourth-order problems such as $C^1$ finite element methods, nonconforming finite element methods, and interior penalty methods.
One may refer to~\cite{Zhang:1996} and~\cite{Brenner:1996} for various examples of $C^1$ conforming and nonconforming finite elements, respectively.

Let $\cT_h$ be a quasi-uniform triangulation\footnotemark[1]\footnotetext[1]{In some finite elements for~\eqref{model_opt} such as the Bogner--Fox--Schmit element~\cite{BFS:1965,Valdman:2020}, the reference element is not a triangle but a quadrilateral.
Nevertheless, for the sake of convenience, we use the terminology ``triangulation" in these cases as well.} of $\Omega$ with $h$ the characteristic element diameter.
Assume that we have the following:
\begin{itemize}
\item a finite element space $S_h \subset L^2 (\Omega)$ defined on $\cT_h$,
\item a norm $\| \cdot \|_h$ defined on $S_h + S$,
\item a symmetric bilinear form $a_h (\cdot, \cdot)$ on $S_h$,
\item an enriching operator $E_h$ that maps $S_h$ to a conforming finite element space $\tS_h \subset S$.
\end{itemize}
Note that $S_h$ is not necessarily a subspace of $S$, but the norm $| \cdot |_h$ is well-defined for functions in both $S_h$ and $S$.
We also assume that every function in $S_h$ is continuous at the vertices of $\cT_h$ so that the following constraint set $K_h \subset S_h$ is well-defined:
\begin{equation*}
K_h = \left\{ v \in S_h : v \leq \psi \textrm{ at the vertices of } \cT_h \right\}.
\end{equation*}
Under the above assumptions, the following finite element approximation of~\eqref{model_opt} defined on $S_h$ can be considered:
\begin{equation}
\label{model_FEM}
\min_{v \in K_h} \left\{ F_h (v) := \frac{1}{2} a_h (v, v)  -(f, v) \right\}.
\end{equation}

In the convergence analysis of the additive Schwarz methods to be introduced in this paper, we rely on several assumptions regarding $\| \cdot \|_h$, $a_h (\cdot, \cdot)$, and $E_h$.
These assumptions are summarized below.

\begin{assumption}
\label{Ass:FEM}
In the finite element discretization~\eqref{model_FEM}, we have the following:
\begin{enumerate}[label=(\alph*)]
    \item The norm $\| \cdot \|_h$ and the bilinear form $a_h (\cdot, \cdot)$ are equivalent in $S_h$, i.e.,
    \begin{equation}
    \label{norm}
        a_h (v, v) \approx \| v \|_h^2, \quad v \in S_h.
    \end{equation}
    \item The enriching operator $E_h \colon S_h \rightarrow \tS_h$ satisfies
    \begin{equation}
    \label{E_h}
        \| v - E_h v \|_{L^2 (\Omega)} + h \left( \sumT | v - E_h v |_{H^1 (T)}^2 \right)^{\frac{1}{2}} + h^2 | E_h v |_{H^2 (\Omega)}
        \lesssim h^2 \| v \|_h, \text{ } v \in S_h.
    \end{equation}
    Moreover, it preserves the function values at the vertices of $\cT_h$~:
    \begin{equation}
    \label{E_h_preserving}
        E_h v = v \text{ at the vertices of } \cT_h, \quad v \in S_h.
    \end{equation}
\end{enumerate}
\end{assumption}

The assumptions presented in Assumption~\ref{Ass:FEM} are valid for various finite element methods for~\eqref{model_opt} encompassing those discussed in~\cite{BS:2017,BSZ:2012}.
An instance of~\eqref{norm} can be found in~\cite[equations~(3.4) and~(3.5)]{BS:2017}.
Moreover, conditions~\eqref{E_h} and~\eqref{E_h_preserving} coincide with those presented in~\cite[equations~(3.10) and~(3.11)]{BS:2017}.
As discussed in~\cite[Remark~3.2]{BS:2017} and~\cite[Examples~2.2--2.4]{BSZ:2012}, for $C^1$ finite element methods, $E_h$ is the natural injection operator, whereas for nonconforming finite element methods, $E_h$ is defined by averaging~\cite{Brenner:1996}.

Thanks to~\eqref{norm}, the energy functional $F_h$ of~\eqref{model_FEM} is strongly convex with respect to the $\| \cdot \|_h$-norm.
Hence,~\eqref{model_FEM} admits a unique solution $u_h \in K_h$.

\begin{remark}
\label{Rem:FEM_error}
To derive an error estimate between $u_h$ and the solution of the continuous problem~\eqref{model_opt}, additional assumptions such as elliptic regularity, interpolation estimates, and the approximability of $a_h (\cdot, \cdot)$ for $a (\cdot, \cdot)$ are typically needed, as discussed in~\cite{BS:2017,BSZ:2012}.
However, since these assumptions are not necessary for the convergence analysis of additive Schwarz methods, we do not include them here.
\end{remark}

\subsection{Additive Schwarz method}
We introduce a general additive Schwarz method for the discrete fourth-order variational inequality~\eqref{model_FEM} based on an abstract space decomposition for the solution space $V = S_h$.
We assume that there exist finite-dimensional spaces $V_k$, $1 \leq k \leq N$, and injective linear operators $R_k^* \colon V_k \rightarrow V$ such that
\begin{equation}
\label{1L}
V = \sumk R_k^* V_k.
\end{equation}
Note that $V_k$ need not be a subspace of $V$.
A stable decomposition assumption~\cite[Assumption~4.1]{Park:2020} associated with~\eqref{1L} for the problem~\eqref{model_FEM} is stated below.

\begin{assumption}[stable decomposition]
\label{Ass:stable}
There exists a constant $C_0$ such that, for any $v, w \in V$ with $v, v+w \in K_h$, there exists a decomposition
\begin{equation*}
w = \sumk \Rw, \quad w_k \in V_k, \gap 1 \leq k \leq N,
\end{equation*}
that satisfies $v + \Rw \in K_h$ and
\begin{equation*}
\sumk a_h (\Rw, \Rw ) \leq \frac{C_0^2}{2} \| w \|_h^2. 
\end{equation*}
\end{assumption} 

The abstract additive Schwarz method for~\eqref{model_FEM} under the space decomposition~\eqref{1L} is presented in Algorithm~\ref{Alg:ASM}.
We note that the same algorithm appeared in several existing works~\cite{BW:2000,Park:2020,THX:2002}.
The constant $N_c$ in Algorithm~\ref{Alg:ASM} is the minimum number of classes that is required to classify the spaces $\{ V_k \}_{k=1}^N$ by a usual coloring technique~\cite[Section~5.1]{Park:2020}.

\begin{algorithm}
\caption{Abstract additive Schwarz method for~\eqref{model_FEM}}
\label{Alg:ASM}
\begin{algorithmic}[]
\STATE Choose $u^{(0)} \in K_h$ and $\tau \in (0, 1/N_c ]$.
\FOR{$n=0,1,2,\dots$}
\item \begin{equation*}
\begin{split}
w_k^{(n+1)} &= \argmin_{\substack{w_k \in V_k,\\ \un + \Rw \in K_h}} F_h \left( \un + \Rw \right), \quad 1 \leq k \leq N, \\
\unn &= \un + \tau \sumk \Rw^{(n+1)}
\end{split}
\end{equation*}
\ENDFOR
\end{algorithmic}
\end{algorithm}

Invoking~\eqref{norm} and Assumption~\ref{Ass:stable}, one can prove the linear convergence of Algorithm~\ref{Alg:ASM}; we state the convergence theorem by Park~\cite[Theorem~4.8]{Park:2020} in a form suitable for our purposes.

\begin{theorem}
\label{Thm:conv}
Assume that Assumptions~\ref{Ass:FEM} and~\ref{Ass:stable} holds.
In Algorithm~\ref{Alg:ASM}, we have $\{ u^{(n)} \} \subset K_h$ and
\begin{equation}
\label{Thm1:conv}
F_h (u^{(n)}) - F_h (u_h)
\leq \left( 1 - \frac{\tau}{2} \min \left\{ 1, \frac{1}{2C_0^2} \right\} \right)^n \left( F_h (u^{(0)}) - F_h (u_h) \right),
\text{ } n \geq 0.
\end{equation}
\end{theorem}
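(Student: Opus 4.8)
The plan is to follow the standard energy-decrease argument for additive Schwarz methods applied to convex optimization, adapting it to the constrained (variational inequality) setting. Since the algorithm keeps iterates feasible, the first task is to verify by induction that $u^{(n)} \in K_h$ for all $n$. The base case is the assumption $u^{(0)} \in K_h$. For the inductive step, each subproblem minimizer $w_k^{(n+1)}$ is constrained so that $u^{(n)} + R_k^* w_k^{(n+1)} \in K_h$; since $K_h$ is defined by pointwise inequalities at the vertices of $\cT_h$ and is therefore convex, the convex combination $u^{(n+1)} = (1 - \tau N_c) u^{(n)} + \tau \sum_{k=1}^N (u^{(n)} + R_k^* w_k^{(n+1)}) \cdot (\text{with coloring})$ — more precisely, using the coloring argument of \cite[Section~5.1]{Park:2020} and the bound $\tau \le 1/N_c$ — stays in $K_h$. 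I would spell this out by grouping the indices $k$ into $N_c$ color classes on which the supports of $R_k^* V_k$ are disjoint, writing $u^{(n+1)}$ as a convex combination of $u^{(n)}$ and the $N_c$ per-color updates, each of which lies in $K_h$.

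The core of the argument is the energy estimate \eqref{Thm1:conv}. Here I would invoke the abstract convergence theorem of Park \cite[Theorem~4.8]{Park:2020} essentially verbatim: the hypotheses of that theorem are (i) strong convexity of $F_h$ with respect to $\|\cdot\|_h$, which follows from Assumption~\ref{Ass:FEM}(a) since $a_h(v,v) \approx \|v\|_h^2$ makes $F_h$ strongly convex with a constant comparable to $1$, (ii) a uniform continuity/smoothness bound on $F_h$ along the subspace directions, which again follows from the norm equivalence \eqref{norm} as $F_h$ is quadratic, and (iii) the stable decomposition property, which is precisely Assumption~\ref{Ass:stable} with constant $C_0$. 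Feeding these into \cite[Theorem~4.8]{Park:2020} yields the contraction factor $1 - \tfrac{\tau}{2}\min\{1, \tfrac{1}{2C_0^2}\}$. I would state the correspondence between our notation and the constants in \cite{Park:2020} (the strong convexity modulus and the "stable decomposition" constant both enter through this min), so that the quoted factor matches exactly.

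If instead a self-contained proof is wanted rather than a citation, the key steps in order would be: (1) fix $n$, set $w = u_h - u^{(n)} \in V$ so that $u^{(n)}, u^{(n)} + w \in K_h$, and apply Assumption~\ref{Ass:stable} to get a decomposition $w = \sum_k R_k^* w_k$ with $u^{(n)} + R_k^* w_k \in K_h$ and $\sum_k a_h(R_k^* w_k, R_k^* w_k) \le \tfrac{C_0^2}{2}\|w\|_h^2$; (2) use optimality of each $w_k^{(n+1)}$ to show $F_h(u^{(n)} + R_k^* w_k^{(n+1)}) \le F_h(u^{(n)} + t R_k^* w_k)$ for all $t \in [0,1]$, then expand the quadratic in $t$ and optimize; (3) sum over $k$, use the coloring to relate $\sum_k$ of the per-subspace decreases to the decrease at $u^{(n+1)}$ via convexity of $F_h$ and the averaging with parameter $\tau$; (4) combine with the identity $F_h(u^{(n)}) - F_h(u_h) = -a_h(g^{(n)}, w) + \tfrac12 a_h(w,w)$ where $g^{(n)}$ is the (constrained) gradient residual, and bound the cross term using the stable decomposition estimate and Cauchy–Schwarz.

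The main obstacle is step (3)/(4): correctly bookkeeping the constrained minimization so that the partial (not full) step $t \in [0,1]$ is what one is allowed to take — one cannot take the unconstrained optimal $t$ because feasibility of $u^{(n)} + R_k^* w_k$ is only guaranteed at $t = 1$, not beyond, and the cross-term sign must be controlled using the variational inequality characterizing $u_h$ rather than an equality. This is exactly the technical point that distinguishes the variational-inequality analysis from the linear case, and it is handled cleanly by the machinery of \cite{Park:2020}; hence my recommendation is to quote \cite[Theorem~4.8]{Park:2020} after verifying its hypotheses, as sketched in the second paragraph.
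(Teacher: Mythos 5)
Your proposal is essentially the same as the paper's proof: both recast the problem so that the abstract convergence result \cite[Theorem~4.8]{Park:2020} applies, with the stable decomposition (Assumption~\ref{Ass:stable}) as the only nontrivial hypothesis and the remaining hypotheses (the paper lists strengthened convexity, local stability, and sharpness of the composite functional $\widetilde{F}_h = F_h + \chi_{K_h}$) following directly from the norm equivalence~\eqref{norm}. The paper explicitly rewrites~\eqref{model_FEM} as the composite problem~\eqref{composite} so that $\{u^{(n)}\}\subset K_h$ falls out of the finiteness of $\widetilde{F}_h(u^{(n)})$ rather than a separate coloring induction, but this is a cosmetic difference and your coloring argument for feasibility is also correct.
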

\begin{proof}
We first observe that the problem~\eqref{model_FEM} is equivalent to the following composite optimization problem:
\begin{equation}
\label{composite}
\min_{v \in S_h} \left\{ \widetilde{F}_h (v) := F_h (v) + \chi_{K_h} (v) \right\},
\end{equation}
where $\chi_{K_h} \colon S_h \rightarrow \overline{\mathbb{R}}$ is the indicator function of $K_h$, which is given by
\begin{equation*}
    \chi_{K_h} (v) = \begin{cases}
    0, &\quad \text{ if } v \in K_h, \\
    \infty, &\quad \text{ if } v \not\in K_h.
    \end{cases}
\end{equation*}
Note that, for $v \in S_h$, $\widetilde{F}_h (v) <\infty$ if and only if $v \in K_h$.

In~\cite{Park:2020}, four conditions were considered to ensure the linear convergence of the general additive Schwarz method for composite optimization problems of the form~\eqref{composite}: stable decomposition, strengthened convexity, local stability, and sharpness~\cite[Assumptions~4.1--4.3 and~3.4]{Park:2020}.
Among them, strengthened convexity and local stability are evident.
Sharpness of the functional $\widetilde{F}_h$ follows from~\eqref{norm}.
Therefore, verifying Assumption~\ref{Ass:stable}, the stable decomposition condition, is sufficient to satisfy the requirements of~\cite[Theorem~4.8]{Park:2020}.
In conclusion, Assumptions~\ref{Ass:FEM} and~\ref{Ass:stable}, invoking~\cite[Theorem~4.8]{Park:2020} yields
\begin{equation*}
\widetilde{F}_h (u^{(n)}) - \widetilde{F}_h (u_h)
\leq \left( 1 - \frac{\tau}{2} \min \left\{ 1, \frac{1}{2C_0^2} \right\} \right)^n \left( \widetilde{F}_h (u^{(0)}) - \widetilde{F}_h (u_h) \right),
\text{ } n \geq 0,
\end{equation*}
which implies that $\{ u^{(n)} \} \subset K_h$ and establishes~\eqref{Thm1:conv}.
\end{proof}

Thanks to Theorem~\ref{Thm:conv}, it suffices to prove Assumption~\ref{Ass:stable} in order for the convergence analysis of Algorithm~\ref{Alg:ASM}.
In Sections~\ref{Sec:1L} and~\ref{Sec:2L}, we will estimate the constant $C_0^2$ in Assumption~\ref{Ass:stable} corresponding to one- and two-level domain decomposition settings in terms of the parameters $h$, $H$, and $\delta$, respectively.

\begin{remark}
\label{Rem:Acc}
The practical performance of Algorithm~\ref{Alg:ASM} can be enhanced by incorporating acceleration schemes designed for first-order methods in convex optimization.
Combining Algorithm~\ref{Alg:ASM} with an acceleration scheme like FISTA~\cite{BT:2009} with adaptive restart~\cite{OC:2015} results in the accelerated additive Schwarz method proposed in~\cite{Park:2021}.
Moreover, the concept of full backtracking introduced in~\cite{CC:2019} can be integrated with Algorithm~\ref{Alg:ASM} to improve the convergence rate~\cite{Park:2022}.
However, these accelerated variants are not covered here as they are beyond the scope of this paper.
\end{remark}

\section{One-level method}
\label{Sec:1L}
In this section, we present a one-level additive Schwarz method for the discrete fourth-order variational inequality~\eqref{model_FEM} based on an overlapping domain decomposition setting.
Then we provide a convergence analysis of the one-level method.
The convergence result given in this section is applicable to any finite element methods that can be interpreted in the unified framework introduced in Section~\ref{Sec:Pre}.

\subsection{Domain decomposition}
\label{Subsec:DD}
Let $\cT_H$ be a quasi-uniform triangulation of $\Omega$ such that $\cT_h$ is a refinement of $\cT_H$, where $H$ stands for the characteristic element diameter of $\cT_H$.
Two triangulations $\cT_h$ and $\cT_H$ will play roles of fine and coarse meshes, respectively.
We assume that the domain $\Omega$ is decomposed into a collection of overlapping subdomains $\{ \Omega_k \}_{k=1}^N$ such that each $\Omega_k$ is a union of elements in $\cT_h$ and $\diam \Omega_k \approx H$.
The overlap width among the subdomains is denoted by $\delta$.


In Algorithm~\ref{Alg:ASM}, we set $V = S_h$ and
\begin{equation}
\label{V_k}
V_k = S_h (\Omega_k), \quad 1 \leq k \leq N,
\end{equation}
where $S_h (\Omega_k)$ is the finite element space of the same type as $S_h$ defined on $\Omega_k$.
The operator $R_k^* \colon V_k \rightarrow V$, $1 \leq k \leq N$, is given by the natural extension operator from $S_h (\Omega_k)$ to $S_h$.
Then we obtain the desired one-level additive Schwarz method for~\eqref{model_FEM}.
We summarize below some essential assumptions on the local spaces $\{ V_k \}_{k=1}^N$ required for the convergence analysis.

\begin{assumption}
\label{Ass:local}
In the local spaces~\eqref{V_k}, we have the following:
\begin{enumerate}[label=(\alph*)]
    \item The local spaces $\{ V_k \}_{k=1}^N$ can be colored with a number of colors $N_c$ independent of $N$.
    \item For any $w \in V$, there exists a decomposition $w = \sum_{k=1}^N R_k^* w_k$, $w_k \in V_k$, such that
    \begin{equation*}
    \begin{cases}
    \displaystyle 0 \leq w_k \leq w, & \quad \text{ if } w > 0, \\
    \displaystyle w \leq w_k \leq 0, & \quad \text{ if } w < 0, \\
    \displaystyle w_k = 0, & \quad \text{ if } w = 0, \\
    \end{cases}
    \quad \text{at the } \cT_h\text{-vertices in } \overline{\Omega}_k,
    \end{equation*}
    and
    \begin{multline}
    \label{Ass1:local}
        \sumk \| R_k^* w_k \|_h^2 \lesssim \| w \|_h^2 + \frac{1}{\delta^2} \sumT | w |_{H^1 (T)}^2 \\
        + \frac{1}{H\delta^3} \| E_h w \|_{L^2 (\Omega)}^2 + \left( \frac{H}{\delta} \right)^3 | E_h w |_{H^2 (\Omega)}^2.
    \end{multline}
\end{enumerate}
\end{assumption}

As discussed in Section~\ref{Sec:Pre}, Assumption~\ref{Ass:local}(a) can be verified using a standard coloring argument.
Regarding Assumption~\ref{Ass:local}(b), it is typically derived by utilizing a $W^{2,\infty}$-partition of unity~\cite{Brenner:1996,BW:2005,Zhang:1996} subordinate to the domain decomposition $\{ \Omega_k \}_{k=1}^N$ and a specific nodal interpolation operator.
In particular, the last two terms in~\eqref{Ass1:local} can be obtained by a trace theorem-type argument introduced in~\cite{DW:1994}.
One may refer to~\cite[Section~8]{Brenner:1996} and~\cite[Section~5]{BW:2005} for details for nonconforming finite element methods and interior penalty methods, respectively; see also~\cite[Remark~5.3]{BDS:2018} and~\cite[Remark~4.2]{BSW:2022}.

\subsection{Convergence analysis}
As we discussed in Section~\ref{Sec:Pre}, it suffices to estimate the constant $C_0$ in Assumption~\ref{Ass:stable} in order to analyze the convergence rate of the one-level additive Schwarz method.
By a similar argument as~\cite[Theorem~5.1]{BDS:2018} and~\cite[Lemma~4.3]{BSW:2022}, one can prove Theorem~\ref{Thm:1L}, which provides an estimate for $C_0$.

\begin{theorem}
\label{Thm:1L}
Suppose that the following conditions hold:
\begin{itemize}
\item The finite element discretization~\eqref{model_FEM} satisfies Assumption~\ref{Ass:FEM}.
\item The local spaces~\eqref{V_k} satisfy Assumption~\ref{Ass:local}.
\end{itemize}
Then, in the one-level domain decomposition~\eqref{1L}, Assumption~\ref{Ass:stable} is satisfied with
\begin{equation*}
C_0^2 \approx \frac{1}{H \delta^3}.
\end{equation*}
\end{theorem}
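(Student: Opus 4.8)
The plan is to verify Assumption~\ref{Ass:stable} directly by combining the positivity-preserving decomposition guaranteed by Assumption~\ref{Ass:local}(b) with the enriching operator estimates in Assumption~\ref{Ass:FEM}(b). Fix $v, w \in V$ with $v, v + w \in K_h$. First I would apply Assumption~\ref{Ass:local}(b) to $w$ to obtain a decomposition $w = \sum_{k=1}^N R_k^* w_k$ with the three-case sign/magnitude control on the $\cT_h$-vertices in $\overline{\Omega}_k$ and the bound~\eqref{Ass1:local}.

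The first key step is to check the constraint $v + R_k^* w_k \in K_h$, i.e.\ $(v + R_k^* w_k)(p) \leq \psi(p)$ at every $\cT_h$-vertex $p$. Since $R_k^* w_k$ is supported in $\overline{\Omega}_k$, this is automatic at vertices outside $\overline{\Omega}_k$. At a vertex $p \in \overline{\Omega}_k$ we split into cases according to the sign of $w(p)$: if $w(p) \leq 0$, then $w_k(p) \leq 0$ in all three cases, so $v(p) + w_k(p) \leq v(p) \leq \psi(p)$ since $v \in K_h$; if $w(p) > 0$, then $0 \leq w_k(p) \leq w(p)$, hence $v(p) + w_k(p) \leq v(p) + w(p) \leq \psi(p)$ since $v + w \in K_h$. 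This is exactly why the positivity-preserving property is designed the way it is, and it gives $v + R_k^* w_k \in K_h$ for every $k$.

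The second key step is to bound $\sum_k a_h(R_k^* w_k, R_k^* w_k)$. By the norm equivalence~\eqref{norm} this is comparable to $\sum_k \| R_k^* w_k \|_h^2$, which by~\eqref{Ass1:local} is controlled by $\| w \|_h^2 + \delta^{-2} \sum_{T} |w|_{H^1(T)}^2 + (H\delta^3)^{-1} \| E_h w \|_{L^2(\Omega)}^2 + (H/\delta)^3 |E_h w|_{H^2(\Omega)}^2$. Now I would absorb the middle two terms using Assumption~\ref{Ass:FEM}(b): from~\eqref{E_h}, $h(\sum_T |w - E_h w|_{H^1(T)}^2)^{1/2} \lesssim h^2 \|w\|_h$ together with an inverse inequality (or directly $|E_h w|_{H^2} \lesssim \|w\|_h$ from~\eqref{E_h}) gives $\sum_T |w|_{H^1(T)}^2 \lesssim h^2 \|w\|_h^2 + \sum_T |E_h w|_{H^1(T)}^2$, and $\| E_h w \|_{L^2} \lesssim \|w - E_h w\|_{L^2} + \|E_h w\|_{L^2} \lesssim h^2 \|w\|_h + \|E_h w\|_{L^2}$; then a Poincar\'e--Friedrichs inequality on $S$ (valid by the boundary condition assumed on $S$) bounds $\|E_h w\|_{L^2}$ and $|E_h w|_{H^1}$ by $|E_h w|_{H^2} \lesssim \|w\|_h$. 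Collecting terms and using $\delta \leq H \lesssim \mathrm{diam}\,\Omega$, every term is dominated by $(H\delta^3)^{-1}\|w\|_h^2$ up to a constant (the $\delta^{-2}$ and $H^{-1}\delta^{-3}$ factors are all $\lesssim (H\delta^3)^{-1}$ after these reductions, since $h \lesssim \delta$ for a meaningful overlap and $H,\delta$ are bounded). Hence $\sum_k a_h(R_k^* w_k, R_k^* w_k) \lesssim (H\delta^3)^{-1} \|w\|_h^2$, which is the claimed $C_0^2 \approx 1/(H\delta^3)$.

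I expect the main obstacle to be the bookkeeping in the last step: carefully showing that each of the four terms on the right of~\eqref{Ass1:local}, after substituting the enriching-operator and Poincar\'e estimates, is genuinely $O(1/(H\delta^3)) \|w\|_h^2$ rather than something larger, and being explicit about which auxiliary inequalities (inverse estimate on $\tS_h$, Poincar\'e--Friedrichs on $S$, and the relation $h \lesssim \delta \lesssim H$) are invoked where. The constraint verification and the appeal to~\eqref{norm} are routine; the scaling argument that collapses the four-term bound into a single $1/(H\delta^3)$ factor is where care is needed, and it mirrors the arguments in~\cite[Theorem~5.1]{BDS:2018} and~\cite[Lemma~4.3]{BSW:2022}.
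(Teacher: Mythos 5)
Your proof is correct and follows essentially the same route as the paper: invoke Assumption~\ref{Ass:local}(b) for the local decomposition, apply the norm equivalence~\eqref{norm} and the bound~\eqref{Ass1:local}, absorb the remaining terms via the enriching-operator estimates~\eqref{E_h} together with the Poincar\'e--Friedrichs inequality on $S$, and collapse the resulting four-term bound to $1/(H\delta^3)$ using $\delta \leq H \lesssim 1$. The only difference is stylistic: you spell out the case analysis at each $\cT_h$-vertex showing that the sign/magnitude control in Assumption~\ref{Ass:local}(b) forces $v + R_k^* w_k \in K_h$, whereas the paper states this directly as part of the conclusion of that assumption.
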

\begin{proof}
Take any $v, w \in V$ with $v, v+w \in K_h$.
By Assumption~\ref{Ass:local}, we have $w_k \in V_k$, $1 \leq k \leq N$, such that $w = \sum_{k=1}^N R_k^* w_k$, $v + R_k^* w_k \in K_h$, and
\begin{multline}
\label{Thm1:1L}
    \sumk a_h (\Rw, \Rw) \approx \sumk \| \Rw \|_h^2 \\
    \lesssim \| w \|_h^2 + \frac{1}{\delta^2} \sumT | w |_{H^1 (T)}^2 + \frac{1}{H \delta^3} \| E_h w \|_{L^2 (\Omega)}^2 + \left( \frac{H}{\delta} \right)^3 | E_h w |_{H^2 (\Omega)}^2,
\end{multline}
where the $\approx$-relation is due to~\eqref{norm}.
By the triangle inequality and~\eqref{E_h}, we obtain
\begin{equation}
\label{Thm2:1L}
    \sumT | w |_{H^1 (T)}^2 \lesssim \sumT | w - E_h w |_{H^1 (T)}^2 + | E_h w |_{H^1 (\Omega)}^2
    \lesssim h^2 \| w \|_h^2 + | E_h w |_{H^1 (\Omega)}^2.
\end{equation}
Invoking the Poincar\'{e}--Friedrichs inequality for $S$~(see, e.g.,~\cite[Theorem~1.10]{Necas:2012} for the case $S = H^2 (\Omega) \cap H_0^1 (\Omega)$) and~\eqref{E_h} yields
\begin{equation}
\label{Thm3:1L}
    \| E_h w \|_{L^2 (\Omega)}^2 + | E_h w |_{H^1 (\Omega)}^2 \lesssim | E_h w |_{H^2 (\Omega)}^2 \lesssim \| w \|_h^2.
\end{equation}
Combining~\eqref{Thm1:1L},~\eqref{Thm2:1L} and~\eqref{Thm3:1L} yields
\begin{equation*}
    \sum_{k=1}^N a_h (\Rw, \Rw)
    \lesssim \left[ 1 + \frac{1}{\delta^2} + \frac{1}{H \delta^3} + \left( \frac{H}{\delta} \right)^3 \right] \| w \|_h^2 \lesssim \frac{1}{H\delta^3} \| w \|_h^2,
\end{equation*}
which completes the proof.
\end{proof}

Theorem~\ref{Thm:1L} implies that the one-level additive Schwarz method is not scalable in the sense that $1/H\delta^3$ increases as the number of subdomains increases.
That is, the larger the number of subdomains, the more iterations are needed in the one-level method.
In order to achieve the scalability, we will deal with how to design an appropriate coarse-level correction in Section~\ref{Sec:2L}.

\section{Two-level method}
\label{Sec:2L}
In this section, we develop a two-level additive Schwarz method for~\eqref{model_FEM} by introducing a novel coarse space that is suitable for fourth-order variational inequalities.
We notice that designing appropriate coarse spaces for higher-order variational inequalities has remained as an open problem for a couple of decades.
In particular, it was proven in~\cite{NW:2001} that linear positivity-preserving interpolation operators for higher-order finite elements, with sufficient accuracy, do not exist; preserving positivity is crucial for designing multilevel methods for constrained problems~\cite{Tai:2003,Tai:2005}.
While a nonlinear interpolation operator that locally preserves linear functions was proposed in~\cite[Section~3]{Tai:2005}, it does not ensure positivity-preserving.
Here, we propose a novel positivity-preserving interpolation operator and utilize it in designing a scalable two-level method.

In the two-level method, we use a space decomposition
\begin{equation}
\label{2L}
V = R_0^* V_0 + \sumk R_k^* V_k,
\end{equation}
where $V_0$ is a finite-dimensional space that plays a role of the coarse space, and $R_0^* \colon V_0 \rightarrow V$ is an injective linear operator.
The two-level additive Schwarz method based on~\eqref{2L} is the same as Algorithm~\ref{Alg:ASM} except that the index $k$ runs from $0$ to $N$, so that we do not present it separately.

\subsection{Coarse space}
Let $\{ x^i \}_{i \in \cI_H}$ denote the collection of all vertices of $\cT_H$.
For each $i \in \cI_H$, we define a region $\omega_i \subset \Omega$ as the union of the coarse elements having $x^i$ as a vertex:
\begin{equation}
    \label{omega_i}
    \overline{\omega}_i = \bigcup_{T \in \cT_H, \smallgap x^i \in \partial T} \overline{T}.
\end{equation}
Clearly, $\{ \omega_i \}_{i \in \cI_H}$ forms an overlapping domain decomposition for $\Omega$.
One can explicitly find a $W^{2,\infty}$-partition of unity $\{ \phi_i \}_{i \in \cI_H}$ subordinate to $\{ \omega_i \}_{i \in \cI_H}$ such that
\begin{subequations}
\label{pou_basis}
\begin{align}
\phi_i \geq 0 \quad \text{ on } \omega_i,& \label{pou_basis0} \\
\phi_i = 0 \quad \text{ on } \Omega \setminus \omega_i,& \label{pou_basis1} \\
\sum_{i \in \cI_H} \phi_i = 1 \quad \text{ on } \overline{\Omega}, \label{pou_basis2}& \\
\| \nabla \phi_i \|_{L^{\infty} (\omega_i)} \lesssim \frac{1}{ H}, \gap
\| \nabla^2 \phi_i \|_{L^{\infty} (\omega_i)} \lesssim \frac{1}{H^2},& \quad i \in \cI_H, \label{pou_basis3}
\end{align}
\end{subequations}
For example, if $\cT_H$ consists of triangular elements, then we can set $\phi_i$ as either the fifth-degree Argyris~\cite{AFS:1968} or the Bell~\cite{Bell:1969} basis function at $x^i$ corresponding to function evaluation.
On the other hand, if each $\omega_i$ is rectangular, we can set $\phi_i$ as the Bogner--Fox--Schmit~\cite{BFS:1965,Valdman:2020} basis function at $x^i$ corresponding to function evaluation.
The coarse space $V_0$ is defined as follows:
\begin{equation}
    \label{V_0}
    V_0 = \left\{ v \in \sum_{i \in \cI_H} \phi_i \mathbb{P}_1 (\omega_i) : v \text{ satisfies the boundary condition of }S \right\},
\end{equation}
where $\mathbb{P}_1 (\omega_i)$ is the collection of linear polynomials defined on $\omega_i$.
Clearly, we have $V_0 \subset S$.
Note that $V_0$ can be regarded as a finite element space on $\cT_H$ defined in terms of the partition of unity method~\cite{BDS:2014}.

Since $V_0 \not\subset V$ in general, it is necessary to define an appropriate coarse prolongation operator $R_0^* \colon V_0 \rightarrow V$ that possesses certain approximation properties.
More precisely, the following assumption is required:

\begin{assumption}
\label{Ass:coarse}
The coarse prolongation operator $R_0^* \colon V_0 \rightarrow V$ satisfies
\begin{equation}
\label{intergrid}
\| v - R_0^* v \|_{L^2 (\Omega)} + H \left( \sumT | v - R_0^* v |_{H^1 (T)}^2 \right)^{\frac{1}{2}} + H^2 \| R_0^* v \|_h \\
\lesssim H^2 | v |_{H^2 (\Omega)},
\quad v \in V_0.
\end{equation}
\end{assumption}

One may refer to~\cite[Lemma~3.2]{Zhang:1996},~\cite[Lemma~6.1]{Brenner:1996}, and~\cite[Lemma~3.3]{BW:2005} for coarse prolongation operators satisfying~\eqref{intergrid} for coarse spaces defined in terms of $C^1$ conforming finite element methods, nonconforming finite element methods, and interior penalty methods, respectively.
With Assumption~\ref{Ass:coarse}, we complete the characterization of the proposed two-level method.

\subsection{Convergence analysis}
As we discussed above, the convergence analysis of the two-level method requires to find a stable decomposition stated in Assumption~\ref{Ass:stable} for~\eqref{2L}.
Due to the presence of constraints, finding stable decompositions for two-level methods for variational inequalities is more challenging than that for linear problems.
In particular, as in the existing works~\cite{Badea:2006,BTW:2003,Tai:2003} for second-order problems, we need to find an appropriate coarse interpolation operator onto $V_0$ that preserves positivity, i.e., the interpolation of a nonnegative function is nonnegative.

Throughout this section, we require several additional assumptions on $\cT_h $ and $\cT_H$.

\begin{assumption}
\label{Ass:cT}
The two triangulations $\cT_h$ and $\cT_H$ satisfy the following:
\begin{enumerate}[label=(\alph*)]
\item Each $\omega_i$, $i \in \cI_H$, is convex.
\item There exists an acute angle $\alpha (H,h) \in (0, \pi/2)$ that depends on $H/h$ only such that, for any $i \in \cI_H$, the sine of the angle formed by any three noncollinear $\cT_h$-vertices in $\omega_i$ is bounded below by $\sin \alpha (H,h)$, where $\omega_i$ was defined in~\eqref{omega_i}.
\end{enumerate}
\end{assumption}

Note that Assumption~\ref{Ass:cT}(b) concerns the minimum angle formed by noncollinear vertices of $\cT_h$. Proposition~\ref{Prop:angle} presents an example that satisfies Assumption~\ref{Ass:cT}.

\begin{proposition}
\label{Prop:angle}
If $\cT_h$ and $\cT_H$ are uniform rectangular meshes, then Assumption~\ref{Ass:cT} holds with
\begin{equation*}
\sin \alpha (H,h) \approx \left( \frac{H}{h} \right)^{-2}.
\end{equation*}
\end{proposition}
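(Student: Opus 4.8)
The plan is to reduce Proposition~\ref{Prop:angle} to an elementary geometric estimate about triples of lattice points. Since $\cT_h$ and $\cT_H$ are uniform rectangular meshes and $\cT_h$ refines $\cT_H$, after rescaling we may assume the fine mesh is the integer lattice $\mathbb{Z}^2$ and each $\omega_i$ is (a subset of) a square of side length $2H/h$ in lattice units. Assumption~\ref{Ass:cT}(a) is immediate: each $\omega_i$ is a union of coarse rectangles sharing the vertex $x^i$, and for a uniform rectangular mesh this union is itself a rectangle, hence convex. So the real content is part~(b): bounding from below the sine of the angle at a vertex of any nondegenerate triangle whose three vertices are $\cT_h$-nodes lying in $\omega_i$.

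First I would fix three noncollinear $\cT_h$-vertices $P$, $Q$, $R$ in $\omega_i$ and consider the angle $\theta$ at $P$; write $u = Q - P$ and $w = R - P$, both nonzero integer vectors (in the rescaled lattice) with $\|u\|_\infty, \|w\|_\infty \le 2H/h$ up to a constant, so $\|u\|, \|w\| \lesssim H/h$. The key identity is
\begin{equation*}
\sin\theta = \frac{|u_1 w_2 - u_2 w_1|}{\|u\|\,\|w\|}.
\end{equation*}
The numerator $|u_1 w_2 - u_2 w_1|$ is the absolute value of a determinant of an integer matrix; since $P$, $Q$, $R$ are noncollinear it is a nonzero integer, hence at least $1$. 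The denominator is at most $\lesssim (H/h)^2$. Therefore $\sin\theta \gtrsim (H/h)^{-2}$, and since this bound is uniform over all admissible triples and all $i \in \cI_H$, we may take $\alpha(H,h)$ to be the arcsine of (a constant times) $(H/h)^{-2}$, which depends on $H/h$ only. This gives the lower bound $\sin\alpha(H,h) \gtrsim (H/h)^{-2}$.

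For the matching upper bound $\sin\alpha(H,h) \lesssim (H/h)^{-2}$, I would exhibit an explicit triple realizing a small angle: take $P$, $Q$ two lattice points at the opposite ends of a long edge of $\omega_i$, so $Q - P = (m, 0)$ with $m \approx H/h$, and take $R = P + (m, 1)$, which still lies in $\omega_i$ (using that $\omega_i$ contains a square of side $\approx H/h$). Then the angle at $P$ between $(m,0)$ and $(m,1)$ has $\tan\theta = 1/m \approx h/H$, so $\sin\theta \approx (H/h)^{-1}$ — wait, that is too large; instead one should place $R$ so the determinant is $1$ while both vectors are long, e.g. $Q - P = (m, 0)$ and $R - P = (m', 1)$ with $m, m' \approx H/h$ and $m \ne m'$; then the determinant equals $m$, not $1$. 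The correct extremal configuration is $Q-P=(m,1)$ and $R-P=(m+1,1)$ wait that has determinant... Let me instead use $u=(m,0)$, $w=(0,1)$ rotated: the cleanest choice is $u = (a,b)$, $w = (a', b')$ chosen so that $ab' - a'b = \pm 1$ with both $\|u\|,\|w\| \approx H/h$; such pairs exist (e.g. consecutive convergents / Farey neighbors scaled into the box, or simply $u=(m,m-1)$, $w=(m+1,m)$ which has determinant $m^2-(m-1)(m+1)=1$ and norm $\approx m \approx H/h$). This realizes $\sin\theta = 1/(\|u\|\|w\|) \approx (H/h)^{-2}$, matching the lower bound. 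Hence $\sin\alpha(H,h) \approx (H/h)^{-2}$, where $\alpha(H,h)$ denotes the minimal such angle.

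The main obstacle is not any deep argument but getting the bookkeeping right: one must verify that the rescaling of a general uniform rectangular $\cT_h$/$\cT_H$ pair genuinely reduces to the integer lattice with $\omega_i$ a box of the stated size (handling rectangles with unequal side ratios, and boundary vertices where $\omega_i$ is a half- or quarter-box), and one must confirm that the extremal triple used for the upper bound actually fits inside some $\omega_i$ — this needs $H/h$ bounded below by an absolute constant, which is harmless since otherwise $\cT_h = \cT_H$ and the statement is trivial. I would also note that the determinant-at-least-one fact is exactly the statement that the triangle $PQR$ has area at least $1/2$ in rescaled units, which is Pick's theorem in its most elementary form; invoking it makes the lower bound a one-line argument.
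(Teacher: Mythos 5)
Your argument is correct and follows essentially the same route as the paper: rescale to the integer lattice, observe that the numerator $|\det[\,u,w\,]|$ of the sine formula is a nonzero integer (hence $\geq 1$) while the denominator is $\lesssim (H/h)^2$, and then exhibit an explicit near-collinear integer triple with unit determinant to show sharpness. Your extremal pair $u=(m,m-1)$, $w=(m+1,m)$ plays the same role as the paper's $p^1=(m-1,m)$, $p^2=(0,0)$, $p^3=(m-2,m-1)$; the only remaining point is to tidy the exploratory false starts and confirm the chosen triple actually lies inside $\omega_i$, which is straightforward.
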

\begin{proof}
As Assumption~\ref{Ass:cT}(a) is clear, we focus on Assumption~\ref{Ass:cT}(b).
We may assume that each element in $\cT_h$ is a square with the side length $h$ and that each $\omega_i$ is a square with the side length $H$.
Then $\omega_i$ is composed of $m \times m$ $\cT_h$-elements, where $m = H/h$.
Since angles are preserved by scaling, we may consider the rectangular grid $\mathcal{G}_m = \{ 0, 1, \dots, m \} \times \{ 0, 1, \dots, m \}$ instead of $\cT_h$ in $\omega_i$.
Take any three noncollinear nodes $p^1$, $p^2$, $p^3$ in $\mathcal{G}_m$.
Recall that the sine of the angle $\angle p^1 p^2 p^3$ formed by these nodes is given by
\begin{equation*}
\sin \angle p^1 p^2 p^3 = \frac{| \det [ p^1 - p^2, p^2 - p^3] |}{| p^1 - p^2 | | p^2 - p^3 | }.
\end{equation*}
Since $\det [p^1 - p^2, p^2 - p^3 ]$ is a nonzero integer, we have $| \det [p^1 - p^2, p^2 - p^3 ] | \geq 1$.
Hence, we get
\begin{equation}
\label{Prop1:angle}
\sin \angle p^1 p^2 p^3 \geq \frac{1}{\sqrt{2}m \cdot \sqrt{2}m} = \frac{1}{2m^2}.
\end{equation}
Finally, we see that taking $p^1 = (m-1, m)$, $p^2 = (0, 0)$, and $p^3 = (m-2, m-1)$ yields $| \det [p^1 - p^2, p^2 - p^3 ] | = 1$, $| p^1 - p^2 | \approx m$, and $| p^2 - p^3 | \approx m$.
This verifies that the inequality~\eqref{Prop1:angle} is sharp.
\end{proof}

The following lemma states that any $u \in \tS_h$ can be locally approximated by a linear function $J_i u \in \mathbb{P}_1 ( \overline{\omega}_i )$ that preserves positivity.
The proof of Lemma~\ref{Lem:J_i} involves intricate arguments and is presented in Section~\ref{Sec:J_i}.

\begin{lemma}
\label{Lem:J_i}
Suppose that Assumption~\ref{Ass:cT} holds.
For any $v \in \tS_h$ and a region $\omega_i$, $i \in \cI_H$, there exists a linear function $J_i v \in \mathbb{P}_1 (\overline{\omega}_i)$ such that
\begin{equation}
    \label{Lem1:J_i}
    \begin{cases}
    \displaystyle 0 \leq J_i v \leq v, & \quad \text{ if } v > 0, \\
    \displaystyle v \leq J_i v \leq 0, & \quad \text{ if } v < 0, \\
    \displaystyle J_i v = 0, & \quad \text{ if } v = 0, \\
    \end{cases}
    \quad \text{at the } \cT_h\text{-vertices in } \overline{\omega}_i,
\end{equation}
and
\begin{multline}
    \label{Lem2:J_i}
    \| v - J_i v \|_{L^2 (\omega_i)} \lesssim
    H | v - J_i v |_{H^1 (\omega_i)} + H^2 | v |_{H^2 (\omega_i)} \\
    \lesssim \frac{H^2}{\sin \alpha (H,h)} \left( 1 + \log \frac{H}{h} \right)^{\frac{1}{2}} | v |_{H^2 (\omega_i)}.
\end{multline}
Moreover, for $\Gamma \subset \partial \omega_i$ with nonvanishing measure, $J_i v$ satisfies the following:
\begin{equation}
\label{J_i_BC}
\begin{split}
&\text{if } v=0 \quad\quad\quad\text{on } \Gamma , \quad \text{ then } J_i v = 0 \text{ on } \Gamma, \\
&\text{if } v = \frac{\partial v}{\partial \nu} = 0 \text { on } \Gamma , \quad \text{ then } J_i v = 0 \quad\text { in } \omega_i,
\end{split}
\end{equation}
where $\nu$ is the outward unit normal vector field along $\Gamma$.
\end{lemma}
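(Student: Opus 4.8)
The plan is to construct $J_i v$ as a solution of a finite-dimensional constrained best-approximation problem on $\mathbb{P}_1(\overline{\omega}_i)$, and then to verify the positivity structure~\eqref{Lem1:J_i}, the boundary behavior~\eqref{J_i_BC}, and the two inequalities of~\eqref{Lem2:J_i} (the second being the hard one). Let $\mathcal{N}_i$ be the set of $\mathcal{T}_h$-vertices lying in $\overline{\omega}_i$. If $v = \partial v/\partial \nu = 0$ on some $\Gamma \subset \partial\omega_i$ of positive measure, I would simply set $J_i v := 0$; this obviously satisfies~\eqref{Lem1:J_i} and~\eqref{J_i_BC}, and since a function in $H^2(\omega_i)$ with vanishing Cauchy data on $\Gamma$ obeys $|v|_{H^1(\omega_i)} \lesssim H |v|_{H^2(\omega_i)}$ and $\|v\|_{L^2(\omega_i)} \lesssim H |v|_{H^1(\omega_i)}$ (Poincar\'e on the convex set $\omega_i$ with $\diam\omega_i \approx H$, Assumption~\ref{Ass:cT}(a)),~\eqref{Lem2:J_i} holds with room to spare. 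Otherwise, let $\Pi_i v \in \mathbb{P}_1(\omega_i)$ be the averaged Taylor polynomial of $v$ over $\omega_i$, so that by standard scaled Bramble--Hilbert estimates $\|v - \Pi_i v\|_{L^2(\omega_i)} \lesssim H^2 |v|_{H^2(\omega_i)}$, $|v - \Pi_i v|_{H^1(\omega_i)} \lesssim H |v|_{H^2(\omega_i)}$, and, using the two-dimensional embedding $W^{2,1} \hookrightarrow L^\infty$, $\|v - \Pi_i v\|_{L^\infty(\omega_i)} \lesssim H |v|_{H^2(\omega_i)}$. I would then define $J_i v$ to be, for instance, a minimizer of $|\nabla(\ell - \Pi_i v)|$ over the set $\mathcal{K}_i$ of $\ell \in \mathbb{P}_1(\overline{\omega}_i)$ obeying~\eqref{Lem1:J_i} at $\mathcal{N}_i$. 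The set $\mathcal{K}_i$ is nonempty ($\ell \equiv 0$ belongs to it), closed, and convex, so a minimizer exists; since the corners of the convex region $\omega_i$ are $\mathcal{T}_H$-vertices, hence $\mathcal{T}_h$-vertices, every $\ell \in \mathcal{K}_i$ automatically has the correct sign throughout $\omega_i$, and any $\mathcal{T}_h$-vertices on a face $\Gamma \subset \partial\omega_i$ with $v|_\Gamma = 0$ lie in $\mathcal{N}_i \cap \Gamma$ and thus force $\ell|_\Gamma = 0$, giving the first line of~\eqref{J_i_BC}.

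For the first inequality of~\eqref{Lem2:J_i}, write $v - J_i v = (v - \Pi_i v) + (\Pi_i v - J_i v)$: the first summand is handled by the $L^2$ estimate above, and the second is a linear polynomial, so an inverse inequality gives $\|\Pi_i v - J_i v\|_{L^2(\omega_i)} \lesssim H |\Pi_i v - J_i v|_{H^1(\omega_i)}$ up to a mean-value contribution that the construction keeps of size $\lesssim H^2 |v|_{H^2(\omega_i)}$ (the averaged Taylor polynomial pins the mean of $v$, and $J_i v$ is built to track it). Together with $|\Pi_i v - J_i v|_{H^1(\omega_i)} \le |v - \Pi_i v|_{H^1(\omega_i)} + |v - J_i v|_{H^1(\omega_i)}$ this yields $\|v - J_i v\|_{L^2(\omega_i)} \lesssim H |v - J_i v|_{H^1(\omega_i)} + H^2 |v|_{H^2(\omega_i)}$, noting that $|v - J_i v|_{H^2(\omega_i)} = |v|_{H^2(\omega_i)}$ since $J_i v$ is linear.

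The crux --- and the step I expect to be by far the hardest --- is the second inequality of~\eqref{Lem2:J_i}, i.e., bounding the linear defect $d_i := \Pi_i v - J_i v$ by $|d_i|_{H^1(\omega_i)} \lesssim \frac{H}{\sin\alpha(H,h)}\bigl(1 + \log(H/h)\bigr)^{1/2} |v|_{H^2(\omega_i)}$, which amounts to bounding the single number $|\nabla d_i|$. The obstruction is that the constraints~\eqref{Lem1:J_i} control $d_i$ only from one side at the $\mathcal{T}_h$-vertices where $v \ne 0$ (one has $d_i(p) \ge -\|v - \Pi_i v\|_{L^\infty(\omega_i)}$ where $v(p) > 0$ and $d_i(p) \le \|v - \Pi_i v\|_{L^\infty(\omega_i)}$ where $v(p) < 0$), whereas they give the two-sided bound $|d_i(p)| \lesssim H |v|_{H^2(\omega_i)}$ only where $v(p) = 0$; hence $\nabla d_i$ cannot be read off the constraint values directly. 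The plan is an active-set argument: because $J_i v$ minimizes $|\nabla(\cdot - \Pi_i v)|$ over $\mathcal{K}_i$, if $|\nabla d_i|$ were large then optimality would force three $\mathcal{T}_h$-vertices $p^1, p^2, p^3 \in \mathcal{N}_i$, forming a triangle of diameter $\gtrsim H$, at which the relevant constraint of~\eqref{Lem1:J_i} is active and $|d_i|$ is small, and reconstructing $d_i$ from its values there, combined with $\sin\angle p^j p^k p^l = |\det[p^j - p^k,\, p^k - p^l]|/(|p^j - p^k|\,|p^k - p^l|) \ge \sin\alpha(H,h)$ from Assumption~\ref{Ass:cT}(b) (as in the proof of Proposition~\ref{Prop:angle}), yields $|\nabla d_i| \lesssim \frac{1}{H\sin\alpha(H,h)} \max_j |d_i(p^j)|$. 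When $v$ does not change sign at $\mathcal{N}_i$ one takes $J_i v$ to be the lower (respectively upper) envelope plane of the data $\{(p, v(p)) : p \in \mathcal{N}_i\}$, $|d_i(p^j)|$ at the active vertices is then $\lesssim \|v - \Pi_i v\|_{L^\infty(\omega_i)} \lesssim H |v|_{H^2(\omega_i)}$, and the factor $1/\sin\alpha(H,h)$ alone suffices. When $v$ does change sign at $\mathcal{N}_i$, $v$ vanishes at some point of $\overline{\omega}_i$, and the discrete Sobolev inequality on $\omega_i$ for fine finite element functions, $\|v\|_{L^\infty(\omega_i)} \lesssim (1 + \log(H/h))^{1/2} |v|_{H^1(\omega_i)}$, is what controls the ``large'' end of $d_i$ and produces the logarithmic factor. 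Collecting the cases yields~\eqref{Lem2:J_i}; the remaining part of~\eqref{J_i_BC} (the clamped face) is the degenerate case $J_i v := 0$ already treated, where $|d_i|_{H^1(\omega_i)} = |\Pi_i v|_{H^1(\omega_i)} \lesssim |v|_{H^1(\omega_i)} \lesssim H |v|_{H^2(\omega_i)}$.
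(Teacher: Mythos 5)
Your overall strategy---define $J_i v$ as a constrained best $\mathbb{P}_1$-approximation of the averaged Taylor polynomial $\Pi_i v$, and then bound the linear defect $d_i = \Pi_i v - J_i v$ by an active-set argument---is a genuinely different route from the paper's, which builds $J_i v$ iteratively via the notion of $\alpha$-biasedness (Definition~\ref{Def:biased} and Lemma~\ref{Lem:linear}) and closes with the discrete Poincar\'e--Friedrichs inequality (Lemma~\ref{Lem:PF}). Several pieces of your proposal are sound: taking $J_i v = 0$ on clamped faces, noting that the corners of the convex $\omega_i$ are $\mathcal{T}_H$- hence $\mathcal{T}_h$-vertices so sign constraints propagate to all of $\omega_i$ by convexity, and reducing~\eqref{Lem2:J_i} to a bound on $|\nabla d_i|$. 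But the step you yourself flag as the crux has a genuine gap.

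The gap is where you assert that ``optimality would force three $\mathcal{T}_h$-vertices $p^1, p^2, p^3$, forming a triangle of diameter $\gtrsim H$, at which the relevant constraint of~\eqref{Lem1:J_i} is active and $|d_i|$ is small.'' Nothing guarantees either half of this. The objective $|\nabla(\ell-\Pi_i v)|^2$ is a strictly convex quadratic in only the two gradient components of $\ell$, so at most two active constraints characterize $\nabla J_i v$, they can lie as close together as $h$, and there is no mechanism to force a large well-shaped triangle. More seriously, at an active vertex $p$ the defect $d_i(p)$ is \emph{not} small when the active side of~\eqref{Lem1:J_i} is $\ell(p)=0$ with $v(p)>0$: then $d_i(p) = \Pi_i v(p)$, which is of size $\sim v(p)$ and is controlled only by $\|v\|_{L^\infty(\omega_i)}$, not by $\|v-\Pi_i v\|_{L^\infty(\omega_i)} \lesssim H|v|_{H^2(\omega_i)}$. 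Your fallback does not close this either: a supporting plane of the data from below need not satisfy $\ell \ge 0$ and so need not lie in $\mathcal{K}_i$, and the discrete Sobolev bound $\|v\|_{L^\infty(\omega_i)} \lesssim (1+\log\frac{H}{h})^{1/2}|v|_{H^1(\omega_i)}$ controls the wrong quantity, since $|v|_{H^1(\omega_i)}$ is not bounded by $H|v|_{H^2(\omega_i)}$ without additional vanishing information on $\nabla v$. Producing exactly that information is the role of $\alpha$-biasedness in the paper: $J_i v$ is built in at most three steps so that either the remainder $v-J_i v$ is not $\alpha(H,h)$-biased, meaning two directional derivatives with a nondegenerate angle vanish somewhere (the precise hypothesis of Lemma~\ref{Lem:PF}, which is where both $1/\sin\alpha$ and the logarithm come from), or it vanishes at three noncollinear $\mathcal{T}_h$-vertices whose angle is $\ge \alpha(H,h)$ by Assumption~\ref{Ass:cT}(b)---which by the mean value theorem again forces non-$\alpha$-biasedness. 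An active-set argument for a variational characterization of $J_i v$ would need to reproduce this dichotomy; as written, it does not.
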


\begin{remark}
At first glance, Lemma~\ref{Lem:J_i} may seem counterintuitive, as it appears unlikely that interpolation by a linear function can both preserve the sign of a function at many nodal points in $\overline{\omega}_i$ and achieve good approximation properties.
However, we provide the following intuition for Lemma~\ref{Lem:J_i}.

Given a function $v \in \widetilde{S}_h$, if the direction of the gradient field $\nabla v$ is nearly constant in $\omega_i$, i.e., $v$ is $\alpha$-biased on $\omega_i$ in the sense of Definition~\ref{Def:biased}, then $v$ remains almost constant along some direction and varies predominantly along the perpendicular direction.
In this case, $v$ behaves similarly to a one-dimensional function along the perpendicular direction, and we can construct an appropriate linear function that preserves positivity while also providing desired approximation properties, using an one-dimensional argument; see Lemma~\ref{Lem:linear}.
On the other hand, if $v$ not $\alpha$-biased, then there exist two directions with a sufficiently large angle such that the partial derivatives of $v$ vanish at certain points.
In this case, it is enough to take a constant function that touches $v$ at an extremal point of $|v|$; this constant function preserves the positivity of $v$ and ensures the desired approximation properties, thanks to a discrete Poincar\'{e}--Friedrichs-type inequality~(see Lemma~\ref{Lem:PF}).

The proof of Lemma~\ref{Lem:J_i} provided in Section~\ref{Sec:J_i} formalizes this intuition through rigorous mathematical arguments.
\end{remark}

For $v \in \tS_h$, we define $J_H v \in V_0$ as
\begin{equation}
\label{J_H}
    J_H v = \sum_{i \in \cI_H} (J_i v) \phi_i,
\end{equation}
where $J_i$ and $\phi_i$ were given in Lemma~\ref{Lem:J_i} and~\eqref{pou_basis}, respectively.
Note that, thanks to~\eqref{J_i_BC}, $J_H u$ satisfies the boundary condition of $S$.
Approximation properties of the operator $J_H \colon \tS_h \rightarrow V_0$ are summarized in Lemma~\ref{Lem:J_H}.

\begin{lemma}
\label{Lem:J_H}
Suppose that Assumption~\ref{Ass:cT} holds.
The operator $J_H \colon \tS_h \rightarrow V_0$ defined in~\eqref{J_H} satisfies
\begin{equation}
\label{Lem1:J_H}
    \begin{cases}
    \displaystyle 0 \leq J_H v \leq v, & \quad \text{ if } v > 0, \\
    \displaystyle v \leq J_H v \leq 0, & \quad \text{ if } v < 0, \\
    \displaystyle J_H v = 0, & \quad \text{ if } v = 0, \\
    \end{cases}
    \quad \text{at the vertices of } \cT_h,
\end{equation}
and
\begin{multline}
\label{Lem2:J_H}
    \| v - J_H v \|_{L^2 (\Omega)} + H | v - J_H v |_{H^1 (\Omega)}
    + H^2 | J_H v |_{H^2 (\Omega)} \\
    \lesssim \frac{H^2}{\sin \alpha (H,h)} \left( 1 + \log \frac{H}{h} \right)^{\frac{1}{2}} | v |_{H^2 (\Omega)}, \quad v \in \tS_h.
\end{multline}
\end{lemma}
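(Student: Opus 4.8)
The plan is to derive both conclusions of Lemma~\ref{Lem:J_H} from the local statements of Lemma~\ref{Lem:J_i} by a routine partition-of-unity argument, exploiting crucially that each $J_i v$ is affine. For the sign-preservation property~\eqref{Lem1:J_H}, fix a vertex $x$ of $\cT_h$. By~\eqref{pou_basis1} only the indices $i \in \cI_H$ with $x \in \overline{\omega}_i$ contribute to $J_H v(x) = \sum_{i \in \cI_H} (J_i v)(x)\,\phi_i(x)$, and for each such $i$ the point $x$ is a $\cT_h$-vertex in $\overline{\omega}_i$, so~\eqref{Lem1:J_i} gives that $(J_i v)(x)$ has the sign of $v(x)$ with $|(J_i v)(x)| \le |v(x)|$. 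Since $\phi_i(x) \ge 0$ and $\sum_{i \in \cI_H} \phi_i(x) = 1$ by~\eqref{pou_basis0} and~\eqref{pou_basis2}, the value $J_H v(x)$ is a nonnegative combination of the $(J_i v)(x)$ with weights summing to $1$; this yields~\eqref{Lem1:J_H} in each of the cases $v(x) > 0$, $v(x) < 0$, $v(x) = 0$.

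For the approximation bound~\eqref{Lem2:J_H}, I would use~\eqref{pou_basis2} to write $v - J_H v = \sum_{i \in \cI_H} (v - J_i v)\phi_i$ and estimate element by element over $T \in \cT_H$. On $T$ only the $\phi_i$ with $x^i$ a vertex of $T$ are nonzero (a uniformly bounded number), the family $\{\omega_i\}_{i \in \cI_H}$ has uniformly bounded overlap, and each $\omega_i$ contains a uniformly bounded number of coarse elements by shape regularity of $\cT_H$; these facts are used when passing from local to global sums. Fix one index $i_0 = i_0(T)$ with $T \subset \omega_{i_0}$. Since $0 \le \phi_i \le 1$ (from~\eqref{pou_basis0}, \eqref{pou_basis2}), on each $T$ one has $\| v - J_H v \|_{L^2(T)} \le \sum_i \| v - J_i v \|_{L^2(\omega_i)}$; squaring, summing over $T$, using the bounded overlap, and invoking the $L^2$ part of~\eqref{Lem2:J_i} gives the bound on $\| v - J_H v \|_{L^2(\Omega)}$.

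For the seminorms, differentiate and use $\sum_{i \in \cI_H} \nabla\phi_i = 0$ on $T$ (differentiate~\eqref{pou_basis2}) to replace the $i$-independent factor $v$ by the fixed affine function $J_{i_0}v$: on $T$, $\nabla(v - J_H v) = \sum_i \phi_i\,\nabla(v - J_i v) + \sum_i (J_{i_0}v - J_i v)\nabla\phi_i$. The first sum is treated exactly as the $L^2$ term, now using the $H^1$ part of~\eqref{Lem2:J_i} (which follows from its second inequality after dividing by $H$). For the second sum, $\|\nabla\phi_i\|_{L^\infty} \lesssim 1/H$ by~\eqref{pou_basis3} and $\| J_{i_0}v - J_i v \|_{L^2(T)} \le \| v - J_i v \|_{L^2(\omega_i)} + \| v - J_{i_0}v \|_{L^2(\omega_{i_0})}$, which~\eqref{Lem2:J_i} bounds by the desired right-hand side. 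For $|J_H v|_{H^2}$ the key point is that $\nabla^2 J_i v = 0$ because $J_i v \in \mathbb{P}_1(\overline{\omega}_i)$ is affine, so
\[
\nabla^2 J_H v = \sum_{i \in \cI_H} \bigl( \nabla J_i v \otimes \nabla\phi_i + \nabla\phi_i \otimes \nabla J_i v + (J_i v)\nabla^2\phi_i \bigr),
\]
and using $\sum_i \nabla\phi_i = \sum_i \nabla^2\phi_i = 0$ on $T$ this becomes $\sum_i \bigl( \nabla(J_i v - J_{i_0}v)\otimes\nabla\phi_i + \nabla\phi_i\otimes\nabla(J_i v - J_{i_0}v) + (J_i v - J_{i_0}v)\nabla^2\phi_i \bigr)$. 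Since $J_i v - J_{i_0}v$ is globally affine, the coarse-scale inverse inequalities $\| \nabla(J_i v - J_{i_0}v) \|_{L^\infty(T)} \lesssim H^{-1} \| J_i v - J_{i_0}v \|_{L^\infty(T)} \lesssim H^{-2} \| J_i v - J_{i_0}v \|_{L^2(T)}$ hold; combining these with $\|\nabla\phi_i\|_{L^\infty} \lesssim 1/H$, $\|\nabla^2\phi_i\|_{L^\infty} \lesssim 1/H^2$ from~\eqref{pou_basis3}, $|T| \approx H^2$, and the triangle-inequality bound on $\| J_i v - J_{i_0}v \|_{L^2(T)}$ above, one more application of~\eqref{Lem2:J_i} controls $\| \nabla^2 J_H v \|_{L^2(T)}$. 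Squaring all the element estimates, summing over $T$, and using the bounded-overlap facts gives~\eqref{Lem2:J_H}; that $J_H v \in V_0$ and satisfies the boundary condition of $S$ is already guaranteed by~\eqref{V_0}, \eqref{J_H}, and~\eqref{J_i_BC}.

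I do not expect a genuine obstacle here: everything is a standard partition-of-unity bookkeeping computation once Lemma~\ref{Lem:J_i} is in hand. The two points needing care are the use of $\sum_i \nabla\phi_i = \sum_i \nabla^2\phi_i = 0$ to subtract the fixed affine function $J_{i_0}v$ on each coarse element, and the fact that the local interpolants are affine — this is precisely what makes $|J_H v|_{H^2}$ controllable and what lets coarse-scale inverse estimates convert the $L^\infty$ and $H^1$ norms of the differences $J_i v - J_{i_0}v$ into $L^2$ norms governed by~\eqref{Lem2:J_i}. The substantive difficulty is entirely confined to Lemma~\ref{Lem:J_i} itself.
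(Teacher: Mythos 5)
Your proof of~\eqref{Lem1:J_H} coincides with the paper's: both observe that at a $\cT_h$-vertex $x$, only indices with $x \in \overline{\omega}_i$ contribute, and~\eqref{pou_basis0}, \eqref{pou_basis2}, and~\eqref{Lem1:J_i} make $J_H v(x)$ a convex combination of sign-preserving values. That part is correct and essentially identical.

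For the approximation estimate~\eqref{Lem2:J_H}, your proof is correct but goes a longer way than the paper's. The paper simply applies the Leibniz rule to the product $(v - J_i v)\phi_i$:
\begin{equation*}
| (v - J_i v)\phi_i |_{H^j(\omega_i)} \lesssim \sum_{l=0}^j | v - J_i v |_{H^l(\omega_i)} \| \nabla^{j-l}\phi_i \|_{L^\infty(\omega_i)}, \quad j = 0, 1, 2,
\end{equation*}
and then observes that~\eqref{Lem2:J_i} already controls $\| v - J_i v \|_{L^2(\omega_i)}$ and $H | v - J_i v |_{H^1(\omega_i)}$, while $| v - J_i v |_{H^2(\omega_i)} = | v |_{H^2(\omega_i)}$ trivially because $J_i v$ is affine. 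Combined with~\eqref{pou_basis3}, each of the three factors contributes the same bound $H^{2-j}/\sin\alpha(H,h) \cdot (1+\log(H/h))^{1/2} | v |_{H^2(\omega_i)}$; summing over the $m_T$ indices on each $T$ and over $T$ finishes the estimate for $| v - J_H v |_{H^j(\Omega)}$, and the $| J_H v |_{H^2(\Omega)}$ bound follows by a plain triangle inequality $| J_H v |_{H^2} \le | v - J_H v |_{H^2} + | v |_{H^2}$. Your proof instead invokes the classical trick of differentiating $\sum_i \phi_i = 1$ and subtracting a fixed affine reference $J_{i_0}v$ on each $T$, then applying coarse-scale inverse estimates to the affine differences $J_i v - J_{i_0}v$. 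That machinery is standard and your computations with it appear sound, but it is not actually required: the device of subtracting a fixed local polynomial is only needed when the local interpolation error is not controlled in all relevant seminorms, and here~\eqref{Lem2:J_i} together with the affineness of $J_i v$ already supplies everything the Leibniz expansion needs. Both routes yield the same constant $\frac{H^2}{\sin\alpha(H,h)}(1+\log(H/h))^{1/2}$; the paper's is shorter and avoids the inverse inequalities you need for the $H^2$ term. Your closing remark that the substantive difficulty lives in Lemma~\ref{Lem:J_i} is exactly right.
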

\begin{proof}
Since~\eqref{Lem1:J_H} is a  direct consequence of~\eqref{pou_basis0},~\eqref{pou_basis2}, and~\eqref{Lem1:J_i}, we provide a proof of~\eqref{Lem2:J_H} only.
Take any $T \in \cT_H$.
Let $\{ x^i \}_{i=1}^{m_T}$ be the vertices of $T$.
Note that $m_T$ is uniformly bounded with respect to $T$ because $\cT_H$ is quasi-uniform.
We define a region $\omega_T \subset \Omega$ as
\begin{equation*}
    \overline{\omega}_T = \bigcup_{i=1}^{m_T} \overline{\omega}_i,
\end{equation*}
where the definition of $\omega_i$ was given in~\eqref{omega_i}.
For $1 \leq i \leq m_T$ and $0 \leq j \leq 2$, we have
\begin{equation} \begin{split}
\label{Lem3:J_H}
| ( v - J_i v ) \phi_i |_{H^j (\omega_i)}
&\lesssim \sum_{l=0}^j | v - J_i v |_{H^l (\omega_i)} \| \nabla^{j-l} \phi_i \|_{L^{\infty} (\omega_i)} \\
&\lesssim \frac{H^{2-j}}{\sin \alpha (H,h)} \left( 1 + \log \frac{H}{h} \right)^{\frac{1}{2}} | v |_{H^2 (\omega_i)},
\end{split} \end{equation}
where the last inequality is due to~\eqref{pou_basis3} and~\eqref{Lem2:J_i}.
It follows that
\begin{multline}
\label{Lem4:J_H}
| v - J_H v |_{H^j (T)} \stackrel{\eqref{pou_basis2}}{=} \left| \sum_{i=1}^{m_T} (v - J_i v) \phi_i \right|_{H^j (T)} \\
\leq \sum_{i=1}^{m_T} | (v - J_i v ) \phi_i |_{H^j (\omega_i)}
\stackrel{\eqref{Lem3:J_H}}{\lesssim} \frac{ H^{2-j} }{\sin \alpha (H,h)} \left( 1 + \log \frac{H}{h} \right)^{\frac{1}{2}} | v |_{H^2 (\omega_T)}.
\end{multline}
Summing~\eqref{Lem4:J_H} over all $T$ yields
\begin{equation}
\label{Lem5:J_H}
    | v - J_H v |_{H^j (\Omega)} \lesssim \frac{H^{2-j}}{\sin \alpha (H,h)} \left( 1 + \log \frac{H}{h} \right)^{\frac{1}{2}} | v |_{H^2 (\Omega)}.
\end{equation}
Finally, combining~\eqref{Lem5:J_H} and
\begin{equation*}
\begin{split}
    | J_H v |_{H^2 (\Omega)} &\leq | v - J_H v |_{H^2 (\Omega)} + | v |_{H^2 (\Omega)} \\
    &\stackrel{\eqref{Lem5:J_H}}{\lesssim} \frac{1}{\sin \alpha (H,h)} \left( 1 + \log \frac{H}{h} \right)^{\frac{1}{2}} | v |_{H^2 (\Omega)}
\end{split}
\end{equation*}
yields~\eqref{Lem2:J_H}.
\end{proof}

Finally, we are ready to estimate the constant $C_0$ in Assumption~\ref{Ass:stable} for the two-level method.
Using the approximation properties of $J_H$ presented in Lemma~\ref{Lem:J_H}, we deduce the following theorem.

\begin{theorem}
\label{Thm:2L}
Suppose that the following conditions hold:
\begin{itemize}
\item The finite element discretization~\eqref{model_FEM} satisfies Assumption~\ref{Ass:FEM}.
\item The local spaces~\eqref{V_k} satisfy Assumption~\ref{Ass:local}.
\item The coarse space~\eqref{V_0} satisfies Assumption~\ref{Ass:coarse}.
\item The triangulations $\cT_h$ and $\cT_H$ satisfy Assumption~\ref{Ass:cT}.
\end{itemize}
Then, in the two-level decomposition~\eqref{2L}, Assumption~\ref{Ass:stable} is satisfied with
\begin{equation*}
C_0^2 \approx \frac{1}{\sin^2 \alpha (H,h)} \left( 1 + \log \frac{H}{h} \right) \left( 1+  \left(\frac{H}{\delta} \right)^3 \right).
\end{equation*}
\end{theorem}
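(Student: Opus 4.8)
The plan is to construct an explicit stable decomposition for the two-level space splitting~\eqref{2L} by first extracting a coarse component via the operator $J_H$ of Lemma~\ref{Lem:J_H}, and then handling the remainder with the one-level decomposition of Assumption~\ref{Ass:local}. Take any $v, w \in V = S_h$ with $v, v+w \in K_h$. The idea is to apply $J_H$ to the enriched function $E_h w \in \tS_h$: set $w_0 = J_H (E_h w) \in V_0$ and then $R_0^* w_0 \in V$ via the coarse prolongation. The crucial point is that the sign conditions~\eqref{Lem1:J_H} of $J_H$, evaluated at the $\cT_h$-vertices, combine with the vertex-preserving property~\eqref{E_h_preserving} of $E_h$ to guarantee that at each $\cT_h$-vertex $x$, the value $(J_H E_h w)(x)$ lies between $0$ and $w(x)$ with the same sign as $w(x)$. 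Consequently, at each vertex, $v(x) + (J_H E_h w)(x) \leq \max\{v(x), v(x)+w(x)\} \leq \psi(x)$, so $v + R_0^* w_0 \in K_h$ provided $R_0^*$ is chosen to preserve (or not worsen) vertex values; more carefully, one uses that $R_0^*$ restricted to vertex values agrees with the nodal interpolation implicit in~\eqref{intergrid}, exactly as in the standard analysis. I would then define the residual $\tilde w = w - R_0^* w_0 \in V$ and apply Assumption~\ref{Ass:local}(b) to $\tilde w$, relative to the new point $\tilde v = v + R_0^* w_0 \in K_h$ — note $\tilde v + \tilde w = v + w \in K_h$ — to obtain $w_k \in V_k$, $1 \leq k \leq N$, with $\tilde w = \sum_{k=1}^N R_k^* w_k$, with $\tilde v + R_k^* w_k \in K_h$, and with the bound~\eqref{Ass1:local} applied to $\tilde w$.

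The estimation then proceeds as in Theorem~\ref{Thm:1L}, but with $\tilde w$ in place of $w$. First, by~\eqref{norm}, $a_h(R_0^* w_0, R_0^* w_0) \approx \| R_0^* w_0 \|_h^2$, and by Assumption~\ref{Ass:coarse}~\eqref{intergrid} applied to $w_0 = J_H E_h w$ together with~\eqref{Lem2:J_H}, this is bounded by $| J_H E_h w |_{H^2(\Omega)}^2 \lesssim \sin^{-2}\alpha(H,h)\,(1 + \log(H/h))\, | E_h w |_{H^2(\Omega)}^2 \lesssim \sin^{-2}\alpha(H,h)\,(1 + \log(H/h))\,\| w \|_h^2$, where the last step uses~\eqref{E_h}. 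For the subdomain terms, I would feed the right-hand side of~\eqref{Ass1:local} (with $\tilde w$) through the same three estimates as in the proof of Theorem~\ref{Thm:1L}: the term $\| \tilde w \|_h^2$ is controlled by $\| w \|_h^2 + \| R_0^* w_0 \|_h^2$, hence by $\sin^{-2}\alpha(H,h)\,(1+\log(H/h))\,\| w \|_h^2$; for the $H^1$-seminorm term $\delta^{-2}\sum_T | \tilde w |_{H^1(T)}^2$ I would split $\tilde w = w - R_0^* w_0$, use~\eqref{E_h} and~\eqref{intergrid} to pass to $| E_h w |_{H^1(\Omega)}^2$ and $| J_H E_h w |_{H^1(\Omega)}^2$, then apply the Poincaré--Friedrichs inequality~\eqref{Thm3:1L} and Lemma~\ref{Lem:J_H} to bound everything by $\| w \|_h^2$ times the logarithmic factor (the $1/\delta^2$ factor being absorbed since $1/\delta^2 \lesssim (H/\delta)^3 \cdot H^{-1}\delta^{-1} \lesssim$ \dots, or more simply $1/\delta^2 \le 1 + (H/\delta)^3$ up to constants as in Theorem~\ref{Thm:1L}); and for the last two terms I need $\| E_h \tilde w \|_{L^2(\Omega)}^2$ and $| E_h \tilde w |_{H^2(\Omega)}^2$, which by $\tilde w = w - R_0^* w_0$ and the triangle inequality reduce to the corresponding norms of $E_h w$ (bounded via~\eqref{Thm3:1L}) and of $E_h R_0^* w_0$ — here I would again use~\eqref{E_h} and~\eqref{intergrid} plus Lemma~\ref{Lem:J_H} to keep the bound at $\| w \|_h^2$ times the logarithmic and $\sin^{-2}\alpha$ factors, with the explicit $(H/\delta)^3$ weight coming out front.

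Collecting, one obtains $\sum_{k=0}^N a_h(R_k^* w_k, R_k^* w_k) \lesssim \sin^{-2}\alpha(H,h)\,(1+\log(H/h))\,(1 + (H/\delta)^3)\,\| w \|_h^2$, which is the claimed bound on $C_0^2$ after matching with the factor $1/2$ in Assumption~\ref{Ass:stable}. The verification that $\tilde v + R_k^* w_k \in K_h$ for $k \ge 1$ is immediate from Assumption~\ref{Ass:local}(b) applied about the base point $\tilde v$, and for $k = 0$ it is the vertex argument described above. The main obstacle, and the step requiring the most care, is the coarse-component feasibility: one must verify rigorously that $R_0^*(J_H E_h w)$ added to $v$ keeps the result in $K_h$ at every $\cT_h$-vertex. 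This hinges on three ingredients lining up — the sign/sandwich property~\eqref{Lem1:J_H} of $J_H$ at $\cT_h$-vertices, the vertex-preservation~\eqref{E_h_preserving} of $E_h$ so that $(E_h w)(x) = w(x)$ at $\cT_h$-vertices, and the compatibility of $R_0^*$ with vertex evaluation (so that prolongating the coarse function does not overshoot $\psi$ at fine vertices). Once these are in place, the rest is a mechanical, if lengthy, chain of triangle inequalities and applications of~\eqref{E_h}, \eqref{intergrid}, \eqref{Thm3:1L}, and Lemma~\ref{Lem:J_H}, exactly paralleling the one-level proof.
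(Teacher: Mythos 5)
Your construction is essentially the paper's own: build the coarse component as $w_0 = J_H(E_h w)$, verify coarse feasibility via the sign-sandwich property \eqref{Lem1:J_H} and vertex preservation \eqref{E_h_preserving}, and then one-level-decompose the residual $w - R_0^* w_0$ and push the four terms of \eqref{Ass1:local} through \eqref{E_h}, \eqref{intergrid}, and Lemma~\ref{Lem:J_H}. You are also right to flag that the argument silently requires $R_0^*$ to preserve values at the fine-grid vertices, a property not literally contained in Assumption~\ref{Ass:coarse} but acknowledged by the paper in Remark~\ref{Rem:conforming} and satisfied by the nodal interpolation used in the numerics.

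One point needs correcting: you verify local feasibility in the form $\tilde v + R_k^* w_k \in K_h$ with $\tilde v = v + R_0^* w_0$, but Assumption~\ref{Ass:stable} (the additive form matching Algorithm~\ref{Alg:ASM}, where every local solve is launched from the same iterate) requires $v + R_k^* w_k \in K_h$; these are not the same constraint because $R_0^* w_0$ shifts the base point at every fine vertex. The required statement nevertheless holds, for the following reason that your argument does not quite make explicit: since $(R_0^* w_0)(x) = (J_H E_h w)(x)$ lies between $0$ and $w(x)$ with matching sign, the residual $\tilde w(x) = w(x) - (R_0^* w_0)(x)$ also lies between $0$ and $w(x)$ with the same sign. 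Hence the sign conditions of Assumption~\ref{Ass:local}(b) applied to $\tilde w$ give $w_k(x)$ sandwiched between $0$ and $w(x)$, from which $v(x) + w_k(x) \le \max\{v(x),\, v(x)+w(x)\} \le \psi(x)$ follows directly relative to the original $v$. With that substitution your proof matches the paper's.
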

\begin{proof}
Throughout this proof, we write
\begin{equation*}
C(H,h) = \frac{1}{\sin^2 \alpha (H,h)} \left( 1 + \log \frac{H}{h} \right).
\end{equation*}
Take any $v,w \in V$ with $v, v+w \in K_h$.
We set $w_0 = J_H (E_h w)$, where $J_H$ was given in~\eqref{J_H}.
Thanks to~\eqref{E_h_preserving} and~\eqref{Lem1:J_H}, we have $v + R_0^* w_0 \in K_h$.
Moreover, we observe that
\begin{equation}
    \label{Thm1:2L}
    | J_H (E_h w) |_{H^2 (\Omega)}^2
    \stackrel{\eqref{Lem2:J_H}}{\lesssim} C(H,h) | E_h w |_{H^2 (\Omega)}^2
    \stackrel{\eqref{E_h}}{\lesssim} C(H,h) \| w \|_h^2.
\end{equation}
Then we can estimate $a_h (R_0^* w_0, R_0^* w_0)$ as follows:
\begin{equation}
    \label{Thm2:2L}
    a_h (R_0^* w_0, R_0^* w_0)
    \stackrel{\eqref{norm}}{\approx} \| R_0^* J_H (E_h w) \|_h^2
    \stackrel{\eqref{intergrid}}{\lesssim} | J_H (E_h w) |_{H^2 (\Omega)}^2
    \stackrel{\eqref{Thm1:2L}}{\lesssim} C(H,h) \| w \|_h^2.
\end{equation}

Meanwhile, by Assumption~\ref{Ass:local}, there exist $w_k \in V_k$, $1 \leq k \leq N$, such that $w - R_0^* w_0 = \sum_{k=1}^N R_k^* w_k$, $v + R_k^* w_k \in K_h$, and
\begin{equation} \begin{split}
    \label{Thm3:2L}
    \sumk a_h &(\Rw, \Rw) \approx \sumk \| \Rw \|_h^2 \\
    &\lesssim \| w - R_0^* w_0 \|_h^2 + \frac{1}{\delta^2} \sumT | w - R_0^* w_0 |_{H^1 (T)}^2 \\
    &\quad + \frac{1}{H \delta^3} \| E_h (w - R_0^* w_0) \|_{L^2 (\Omega)}^2 + \left( \frac{H}{\delta} \right)^3 | E_h (w - R_0^* w_0) |_{H^2 (\Omega)}^2,
\end{split} \end{equation}
where the $\approx$-relation in the first line is due to~\eqref{norm}.
In order to estimate the rightmost-hand side of~\eqref{Thm3:2L}, we estimate some norms of $w - R_0^* w_0$; using~\eqref{E_h},~\eqref{Lem2:J_H},~\eqref{intergrid}, and~\eqref{Thm1:2L}, we get
\begin{subequations}
\label{Thm4:2L}
\begin{equation} \begin{split}
\| w - R_0^* w_0 \|_{L^2 (\Omega)}^2
&\lesssim \| w - E_h w \|_{L^2 (\Omega)}^2 + \| E_h w - J_H (E_h w) \|_{L^2 (\Omega)}^2  \\
&\quad + \| J_H (E_h w) - R_0^* J_H (E_h w) \|_{L^2 (\Omega)}^2 \\
&\lesssim \left( h^4 + C (H,h) H^4 \right) \| w \|_h^2
\lesssim C (H,h) H^4 \| w \|_h^2.
\end{split} \end{equation}
In the same manner, we have
\begin{equation} \begin{split}
    &\sumT | w - R_0^* w_0 |_{H^1 (T)}^2  \\
    &\lesssim \sumT  | w - E_h w |_{H^1 (T)}^2 + | E_h w - J_H (E_h w) |_{H^1 (\Omega)}^2 \\
    &\quad + \sumT | J_H (E_h w) - R_0^* J_H (E_h w) |_{H^1 (T)}^2 \\
    &\lesssim \left( h^2 + C (H,h) H^2 \right) \| w \|_h^2
    \lesssim C (H,h) H^2 \| w \|_h^2
\end{split} \end{equation}
and
\begin{equation}
\| w - R_0^* w_0 \|_h^2
\lesssim \|w \|_h^2 + \| R_0^* w_0 \|_h^2
\lesssim \|w \|_h^2 + | J_H (E_h w) |_{H^2 (\Omega)}^2
\lesssim C (H,h) \| w \|_h^2.
\end{equation}
\end{subequations}
Then, invoking~\eqref{Thm3:2L} with~\eqref{E_h} and~\eqref{Thm4:2L} yields
\begin{equation}
    \label{Thm5:2L}
    \sumk a (R_k^* w_k, R_k^* w_k ) \lesssim C (H,h) \left[ 1 + \left( \frac{H}{\delta} \right)^2 + \left( \frac{H}{\delta} \right)^3 \right] \| w \|_h^2.
\end{equation}
Combining~\eqref{Thm2:2L} and~\eqref{Thm5:2L} yields the desired result.
\end{proof}

Theorem~\ref{Thm:2L} implies that the two-level method is scalable because the convergence rate depends only on $H/h$ and $H/\delta$.
In both cases of small overlap $\delta \approx h$ and generous overlap $\delta \approx H$, the convergence rate is uniformly bounded by a function of $H/h$.
Therefore, even when the fine mesh size $h$ is very small, a large number of iterations is not necessary to achieve a solution of~\eqref{model_FEM} with a certain level of accuracy, provided that we have a sufficient number of subdomains such that $H/h$ remains fixed.

\begin{remark}
\label{Rem:conforming}
In the two-level method, we employ a conforming coarse space $V_0$, while the choice of the fine finite element space $V$ can be any discretization that fits within the framework outlined in Section~\ref{Sec:Pre}~\cite{Lee:1993}.
Should one opt for a coarse space defined in terms of a nonconforming or interior penalty method, it is essential to design an appropriate intergrid transfer operator that maps $V_0$ to $V$; refer to~\cite[Lemma~6.1]{Brenner:1996} and~\cite[Lemma~3.3]{BW:2005} for nonconforming and interior penalty methods, respectively.
However, the intergrid transfer operators discussed in these references are defined using certain coarse enriching operators, which do not preserve nodal values on the fine grid.
Consequently, they cannot be employed to construct a stable decomposition that satisfies the pointwise constraints $v + R_0^* w_0 \in K_h$ as described in Assumption~\ref{Ass:stable}.
Hence, we use a common conforming coarse space for all types of fine discretizations.
\end{remark}

\section{Local and coarse problems}
\label{Sec:Solvers}
When implementing Algorithm~\ref{Alg:ASM}, we have to consider how to solve local and coarse problems of the form 
\begin{equation}
\label{local_original}
\min_{\substack{w_k \in V_k, \\ v + \Rw \in K_h}} F_h \left( v + \Rw \right),
\end{equation}
where $v \in S_h$ and $0 \leq k \leq N$.
In this section, we address how to solve problems of the form~\eqref{local_original}.
With an abuse of notation, we do not distinguish between finite element functions and the corresponding vectors of degrees of freedom.

Let $A_h$ and $f_h$ be the stiffness matrix and the load vector induced by $a_h(\cdot, \cdot)$ and $(f, \cdot)$ in~\eqref{model_FEM}, respectively, i.e.,
\begin{equation*}
a_h (v, w) = v^{\rT} A_h w, \quad
(f, v) = f_h^{\rT} v,
\end{equation*}
for $v, w \in S_h$.
In addition, let $R_k^{\rT}$ be the matrix representation of the operator $R_k^*$.
Then~\eqref{local_original} is rewritten as the following constrained quadratic optimization problem:
\begin{equation}
\label{local_quad}
\min_{w_k \in K_k} \left\{ \frac{1}{2} w_k^{\rT} A_k w_k - f_k^{\rT} w_k \right\},
\end{equation}
where $A_k = R_k A_h R_k^{\rT}$, $f_k = R_k (A_h v - f_h)$, and
\begin{equation*}
K_k = \left\{ w_k \in V_k : v + R_k^{\rT} w_k \in K_h \right\}.
\end{equation*}

\subsection{Local problems}
In local problems, i.e., when $1 \leq k \leq N$, the operator $R_k^{\rT}$ is the natural extension operator, which implies that the set $K_k$ encodes pointwise constraints as in the full-dimensional problem~\eqref{model_FEM}.
Hence, any numerical method for the full-dimensional problem~\eqref{model_FEM} can be utilized to solve local problems.
For instance, one may adopt the primal-dual active set method~\cite{BIK:1999,HIK:2002} to solve~\eqref{local_quad}.
We note that an auxiliary linear problem~\cite{BDS:2018,BSW:2022,Lee:2013} should be solved at each iteration of the primal-dual active set method.
Alternatively, as the Euclidean projection onto the set $K_k$ of pointwise constraints admits a closed formula~\cite{LP:2021}, any modern forward-backward splitting algorithms for convex optimization~(see, e.g.,~\cite{BT:2009,CC:2019,OC:2015}) can be applied to solve~\eqref{local_quad}.

\subsection{Coarse problems}
In coarse problems, the operator $R_0^{\rT}$ corresponds to the coarse prolongation matrix.
Unlike in local problems, the constraint set $K_0$ is not given pointwise; the $K_h$-constraints are imposed at the vertices of the fine mesh $\cT_h$, while the degrees of freedom of $V_0$ are located at the vertices of the coarse mesh $\cT_H$.
To tackle such problems, interior point methods capable of handling general matrix inequality constraints can be employed~\cite{AG:1999,VC:1993}.
On the other hand, in~\cite{BTW:2003}, a nonlinear Gauss--Seidel algorithm was proposed to solve coarse problems that arise in Schwarz methods for second-order variational inequalities; this algorithm iteratively minimizes the energy functional with respect to each coarse degree of freedom.

For simpler algorithms, an alternative approach based on Fenchel--Rockafellar duality may be considered~\cite{LP:2020,LP:2021}.
We begin by rewriting the constraint set $K_k$ in \eqref{local_quad} as follows:
\begin{equation*}
\label{coarse}
K_k = \left\{ w_k \in V_k : J R_k^{\rT} w_k \leq \psi - J v \right\},
\end{equation*}
where $J$ is a boolean matrix, whose entries are 0's and 1's, that extracts the degrees of freedom corresponding to function values from the full set of degrees of freedom, which includes both function and derivative values.
The following proposition states that a solution of the problem~\eqref{local_quad} can be recovered from a solution of a particular dual problem.

\begin{proposition}
\label{Prop:dual}
Let $w_k$ be a solution of the problem~\eqref{local_quad} and let $\lambda_k$ be a solution of the constrained quadratic optimization problem
\begin{equation}
\label{coarse_dual}
\min_{\lambda_k \geq 0} \left\{ \frac{1}{2} (R_k J^{\rT} \lambda_k - f_k)^{\rT} A_k^{-1} (R_k J^{\rT} \lambda_k - f_k) + (\psi - J v)^{\rT} \lambda_k \right\}. 
\end{equation}
Then we have
\begin{equation*}
w_k = A_k^{-1} (f_k - R_k J^{\rT} \lambda_k ).
\end{equation*}
\end{proposition}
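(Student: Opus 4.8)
The plan is to derive \eqref{coarse_dual} as the Fenchel--Rockafellar dual of \eqref{local_quad} and then read off the primal-dual optimality relations. First I would rewrite \eqref{local_quad} in the composite form
\begin{equation*}
\min_{w_k \in V_k} \left\{ g(w_k) + \chi_{C}(J R_k^{\rT} w_k) \right\}, \quad g(w_k) = \frac{1}{2} w_k^{\rT} A_k w_k - f_k^{\rT} w_k,
\end{equation*}
where $C = \{ z : z \leq \psi - J v \}$ and $\chi_C$ is its indicator function; here I use that $A_k = R_k A_h R_k^{\rT}$ is symmetric positive definite because $a_h(\cdot,\cdot)$ is coercive on $S_h$ (Assumption~\ref{Ass:FEM}(a)) and $R_k^*$ is injective, so $g$ is strongly convex and \eqref{local_quad} has a unique solution $w_k$. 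Introducing the linear map $B = J R_k^{\rT}$, the Fenchel--Rockafellar dual is
\begin{equation*}
\min_{\lambda_k} \left\{ g^*(-B^{\rT} \lambda_k) + \chi_C^*(\lambda_k) \right\},
\end{equation*}
and I would compute the two conjugates explicitly: $\chi_C^*(\lambda_k) = \sup_{z \leq \psi - Jv} z^{\rT}\lambda_k = (\psi - Jv)^{\rT}\lambda_k$ if $\lambda_k \geq 0$ and $+\infty$ otherwise; and $g^*(p) = \frac{1}{2}(p + f_k)^{\rT} A_k^{-1}(p + f_k)$. Substituting $p = -B^{\rT}\lambda_k = -R_k J^{\rT}\lambda_k$ gives exactly the objective in \eqref{coarse_dual}, so \eqref{coarse_dual} is the dual problem.

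Next I would invoke strong duality. Since $g$ is a finite convex quadratic and the feasible set of \eqref{local_quad} is assumed nonempty (it contains the current iterate component, as guaranteed by the stable decomposition structure in Assumption~\ref{Ass:stable}), the standard Fenchel--Rockafellar duality theorem applies with no duality gap and the dual \eqref{coarse_dual} attains its minimum at some $\lambda_k \geq 0$. The primal-dual optimality condition is $-B^{\rT}\lambda_k \in \partial g(w_k)$, i.e. $A_k w_k - f_k = -R_k J^{\rT}\lambda_k$, which rearranges to $w_k = A_k^{-1}(f_k - R_k J^{\rT}\lambda_k)$, the claimed formula. Concretely, I would derive this relation by writing the KKT system of \eqref{local_quad}: stationarity $A_k w_k - f_k + B^{\rT}\lambda_k = 0$, primal feasibility $B w_k \leq \psi - Jv$, dual feasibility $\lambda_k \geq 0$, and complementarity $\lambda_k^{\rT}(\psi - Jv - Bw_k) = 0$; the first equation is precisely the recovery formula.

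I do not anticipate a serious obstacle here; the result is a routine instance of quadratic programming duality. The only point requiring a little care is confirming that $A_k$ is genuinely invertible so that $g^*$ and the recovery formula make sense: this follows because $R_k^*$ is injective by hypothesis and $A_h$ is positive definite on $S_h$ by the norm equivalence \eqref{norm}, hence $A_k = R_k A_h R_k^{\rT}$ is positive definite on $V_k$. A secondary bookkeeping issue is the sign convention in the conjugate computation (whether $-B^{\rT}\lambda_k$ or $+B^{\rT}\lambda_k$ appears), which I would pin down by carrying the Lagrangian $L(w_k,\lambda_k) = g(w_k) + \lambda_k^{\rT}(Bw_k - \psi + Jv)$ explicitly and minimizing over $w_k$ first. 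Once the signs are matched, both the dual objective in \eqref{coarse_dual} and the recovery formula $w_k = A_k^{-1}(f_k - R_k J^{\rT}\lambda_k)$ fall out immediately.
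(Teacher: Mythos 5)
Your proof is correct and takes the same approach as the paper, which simply cites the Fenchel--Rockafellar duality framework of \cite[Proposition~4.1]{LP:2020} and the worked example in \cite[Section~2.3]{LP:2021} without carrying out the computation; your explicit derivation of the conjugates $g^*$ and $\chi_C^*$, the Lagrangian sign-check, and the KKT stationarity relation is exactly the content behind that citation. The only small remark is that nonemptiness of $K_k$ follows more directly from $w_k = 0$ being feasible (since the current iterate $v$ lies in $K_h$) than from Assumption~\ref{Ass:stable}, but this does not affect the argument.
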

\begin{proof}
It is straightforward by the theory of Fenchel--Rockafellar duality~\cite[Proposition~4.1]{LP:2020}.
One may refer to~\cite[Section~2.3]{LP:2021} for a concrete example for the construction of a dual problem.
\end{proof}

Thanks to Proposition~\ref{Prop:dual}, one may solve the dual problem~\eqref{coarse_dual} instead of the original problem~\eqref{local_quad} in order to obtain the solution of~\eqref{local_quad}.
Since computing the Euclidean projection onto the constraint set $\lambda_k \geq 0$ is straightforward,~\eqref{coarse_dual} can be solved using, e.g., a forward-backward splitting algorithm~\cite{BT:2009,CC:2019,OC:2015}.

\section{Numerical experiments}
\label{Sec:Numerical}
In this section, we conduct numerical experiments that  support the theoretical results presented in this paper.
All codes used in our numerical experiments were programmed using MATLAB R2022b and performed on a desktop equipped with AMD Ryzen~5 5600X CPU~(3.7GHz, 6C), 40GB RAM, and the operating system Windows 10~Pro.

Let $\Omega = (0,1)^2 \subset \mathbb{R}^2$, and let $\cT_H$ be a coarse triangulation of $\Omega$ consisting of $N = 1/H \times 1/ H$ square elements.
We further refine $\cT_H$ to obtain a fine triangulation $\cT_h$, so that $\cT_h$ consists of $1/h \times 1/h$ square elements.
By enlarging each coarse element in $\cT_H$ to include its surrounding layers of fine elements in $\cT_h$ with width $\delta$ such that $0 < \delta < H/2$, we construct an overlapping domain decomposition $\{ \Omega_k \}_{k=1}^N$ of $\Omega$.

In~\eqref{model_FEM}, we choose the finite element space $S_h$ as the Bogner--Fox--Schmit bicubic element space~\cite{BFS:1965,Valdman:2020} defined on $\cT_h$.
Since $S_h$ is conforming, we can simply set the discrete bilinear form $a_h (\cdot, \cdot)$ such that it agrees with the continuous bilinear form $a( \cdot, \cdot)$.
Moreover, by setting $\| v \|_h = \sqrt{a_h (v, v)}$ for $v \in S_h$, $\tS_h = S_h$, and $E_h$ as the identity operator, it is evident that Assumption~\ref{Ass:FEM} is satisfied.
On the other hand, Assumption~\ref{Ass:local} can be verified by combining the result in~\cite{Zhang:1996} with a trace theorem-type argument~\cite{DW:1994}.
Setting $\phi_i$ in~\eqref{pou_basis} in terms of the Bogner--Fox--Schmit basis function on $\cT_H$, and choosing the coarse prolongation operator $R_0^T$ as the natural nodal interpolation operator guarantees that Assumption~\ref{Ass:coarse} holds, invoking standard interpolation error estimates~\cite{BS:2008}.
Finally, Proposition~\ref{Prop:angle} implies that Assumption~\ref{Ass:cT} holds.
In summary, we have provided a setting that satisfies all the assumptions given in Theorem~\ref{Thm:2L}.

In Algorithm~\ref{Alg:ASM}, the step size $\tau$ is given by $\tau = 1/5$~(cf.~\cite[equation~(5.6)]{Park:2020}).
All local problems are solved using the primal-dual active set method~\cite{BIK:1999,HIK:2002}, with the stopping criterion given by
\begin{equation*}
\frac{\| w_k^{(n+1)} - w_k^{(n)} \|_{A_k}} {\|w_k^{(n)} \|_{A_k}} < 10^{-12}.
\end{equation*}
In the two-level method, coarse problems are solved using the interior point method~\cite{AG:1999,VC:1993}, as implemented in the \texttt{quadprog} subroutine of the MATLAB Optimization Toolbox~\cite{MATLAB}.

In each example, we use a reference solution $u_h$ to measure the convergence rates of the proposed methods.
The reference solution is obtained by sufficiently many iterations of the primal-dual active set method.

\subsection{Displacement obstacle problem of clamped plates}

\begin{figure}
\centering
\subfloat[][One-level, $\delta/h = 2^0$]{ \includegraphics[width=0.31\linewidth]{./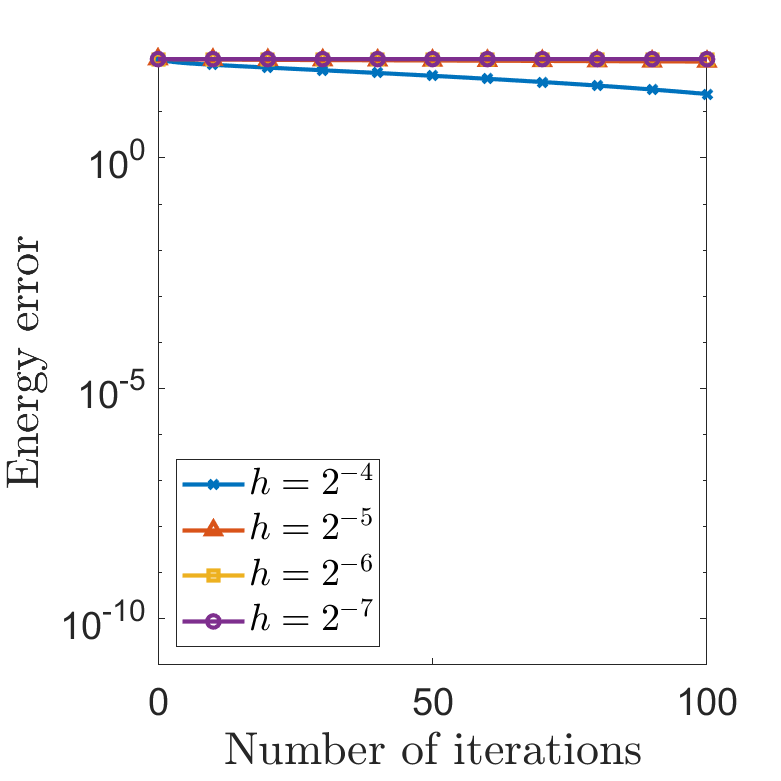} }
\subfloat[][One-level, $\delta/h = 2^1$]{ \includegraphics[width=0.31\linewidth]{./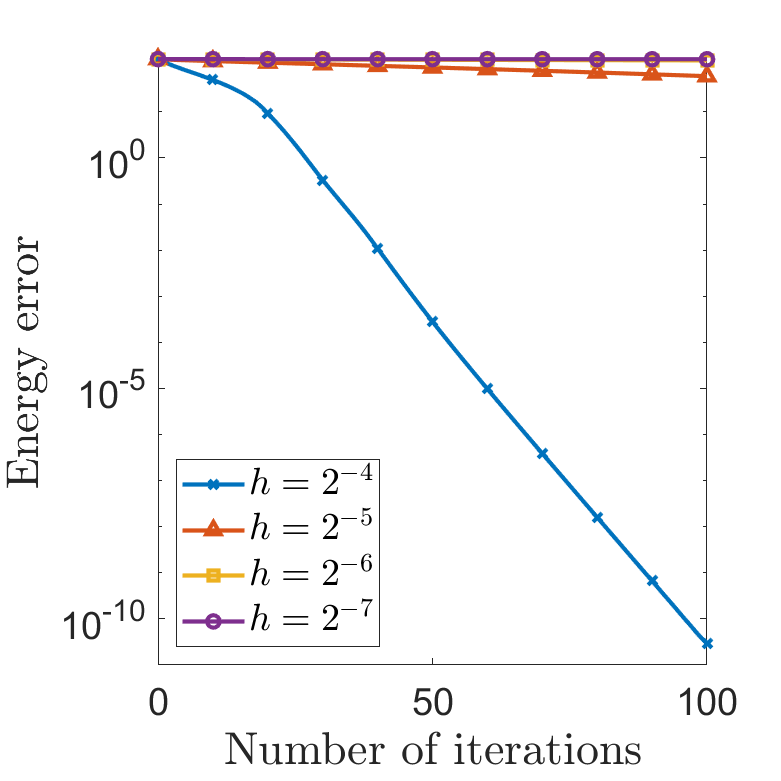} }
\subfloat[][One-level, $\delta/h = 2^2$]{ \includegraphics[width=0.31\linewidth]{./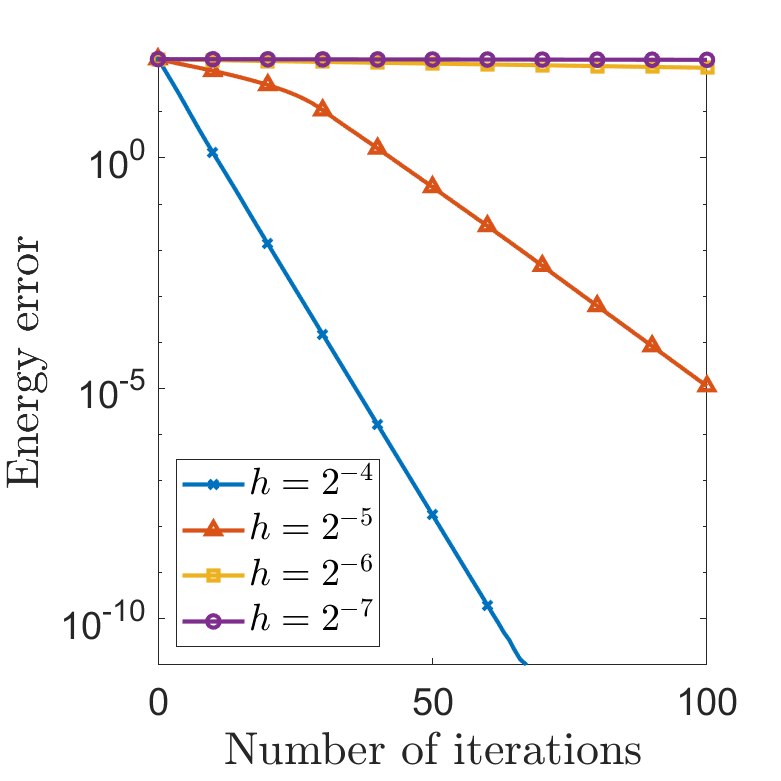} } \\
\subfloat[][Two-level, $\delta/h = 2^0$]{ \includegraphics[width=0.31\linewidth]{./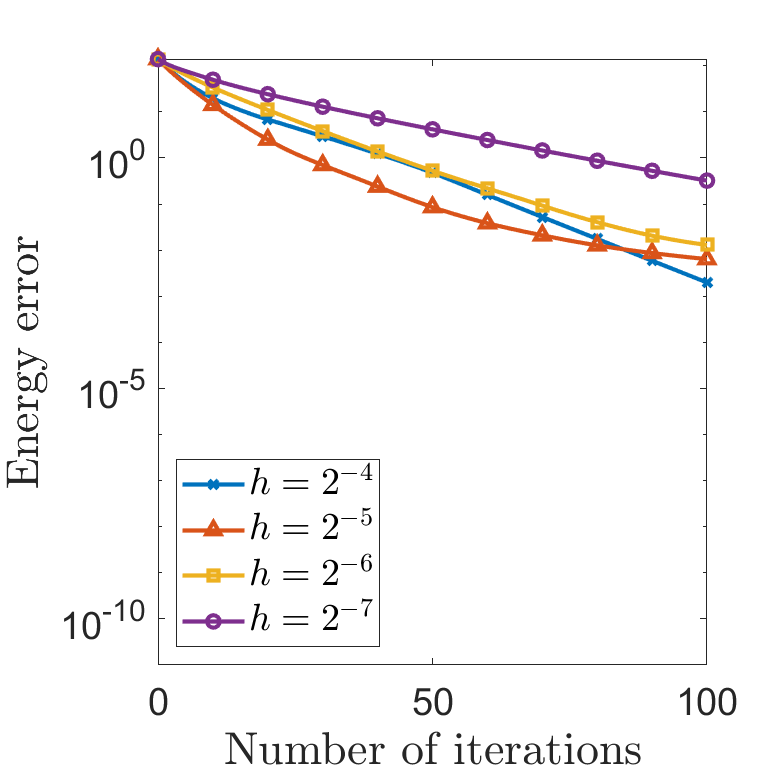} }
\subfloat[][Two-level, $\delta/h = 2^1$]{ \includegraphics[width=0.31\linewidth]{./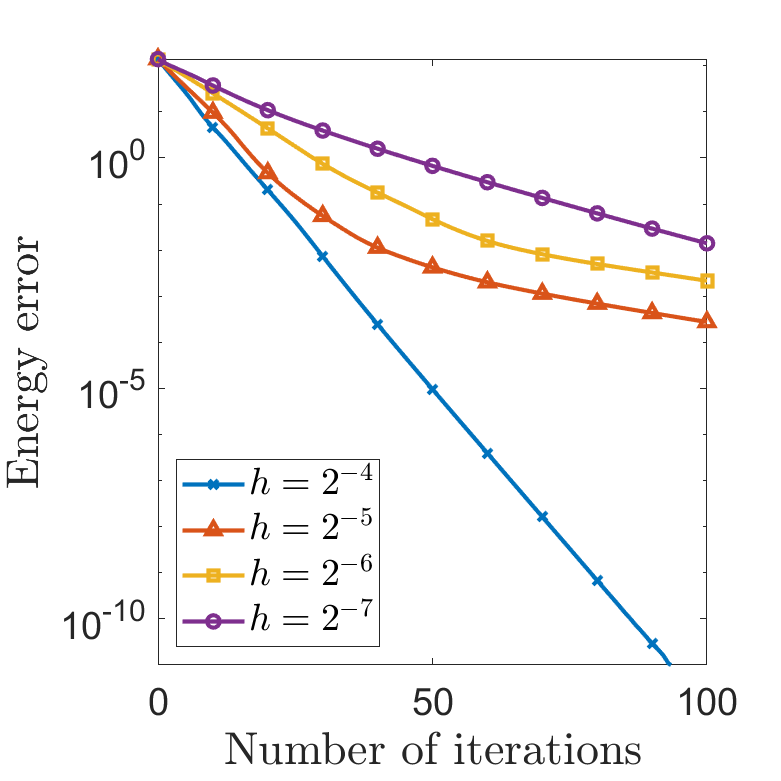} }
\subfloat[][Two-level, $\delta/h = 2^2$]{ \includegraphics[width=0.31\linewidth]{./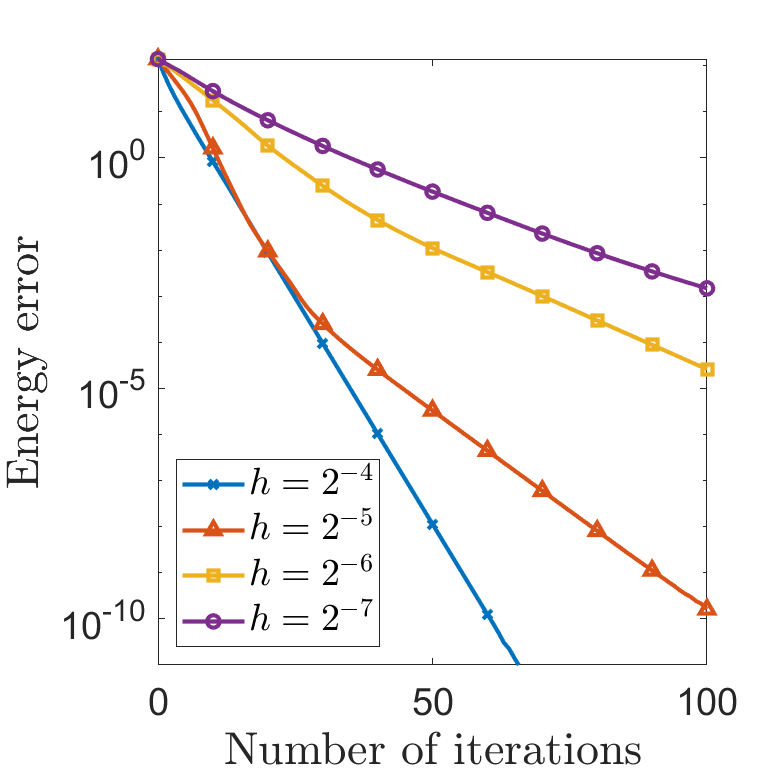} }
\caption{Decay of the relative energy error $\frac{F_h (\un) - F_h(u_h)}{| F_h (u_h) |}$ in the one- and two-level additive Schwarz methods for the displacement obstacle problem of clamped plates~\eqref{PL}.
The subdomain size $H$ and the mesh size $h$ vary satisfying $H/h = 2^3$.}
\label{Fig:PL}
\end{figure}

First, we consider the displacement obstacle problem of clamped plates~\eqref{PL}.
In~\eqref{model_opt}, let $f = 10^3$ and
\begin{equation*}
\psi (x_1, x_2) = \frac{1}{2} - \left( x_1 - \frac{1}{2} \right)^2 - \left( x_2 - \frac{1}{2} \right)^2.
\end{equation*}
The convergence curves of the one-level additive Schwarz method are depicted in Figure~\ref{Fig:PL}(a--c).
While the convergence rates of the one- and two-level methods are comparable for $h = 2^{-4}$, where the dimension of the coarse space $V_0$ is relatively small and does not significantly affect the convergence rate, the difference becomes evident as the mesh size $h$ decreases, increasing the dimension of the coarse space.
The convergence rate of the one-level method accelerates with increasing overlap width $\delta$ but slows down significantly as the mesh size $h$ decreases.
This behavior aligns with Theorem~\ref{Thm:1L}; the convergence rate of the one-level method depends on $\delta^{-4}$ when $H/\delta$ is fixed.
On the contrary, as shown in Figure~\ref{Fig:PL}(d--f), the asymptotic convergence rate of the two-level method does not decrease even if $h$ becomes small.
More precisely, for $h \leq 2^{-5}$, the convergence curves appear nearly parallel to each other with sufficiently large iterations.
This observation confirms the assertion of Theorem~\ref{Thm:2L}, which states that the convergence rate of the two-level method remains uniformly bounded given fixed ratios of $H/h$ and $H/\delta$.

\subsection{Elliptic distributed optimal control problem}

\begin{figure}
\centering
\subfloat[][One-level, $\delta/h = 2^0$]{ \includegraphics[width=0.31\linewidth]{./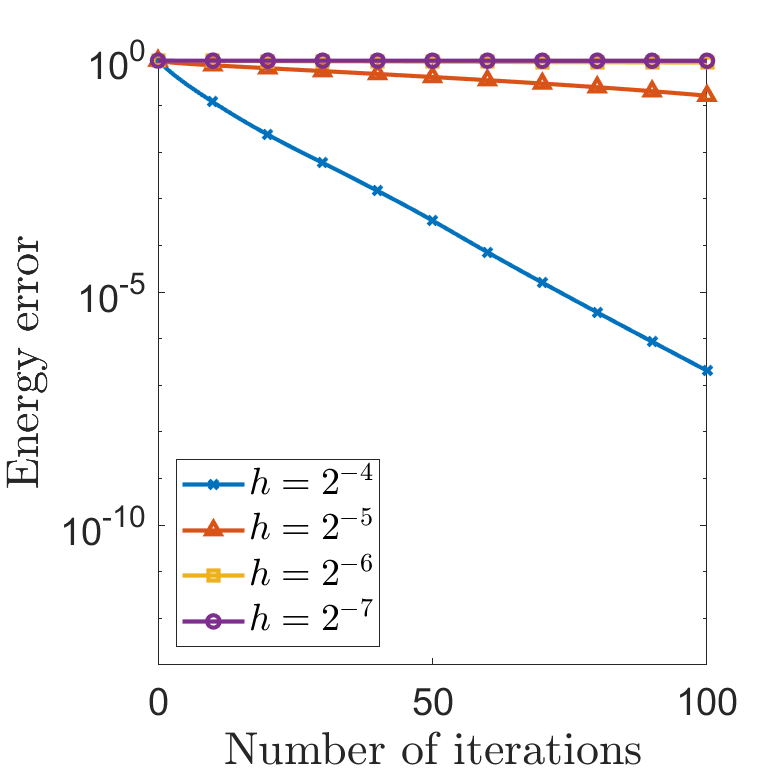} }
\subfloat[][One-level, $\delta/h = 2^1$]{ \includegraphics[width=0.31\linewidth]{./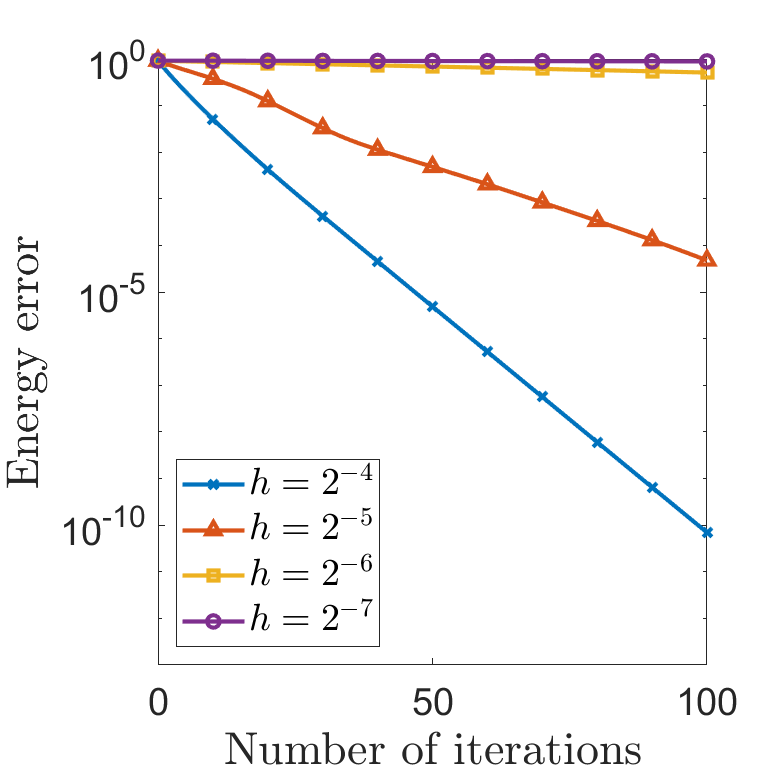} }
\subfloat[][One-level, $\delta/h = 2^2$]{ \includegraphics[width=0.31\linewidth]{./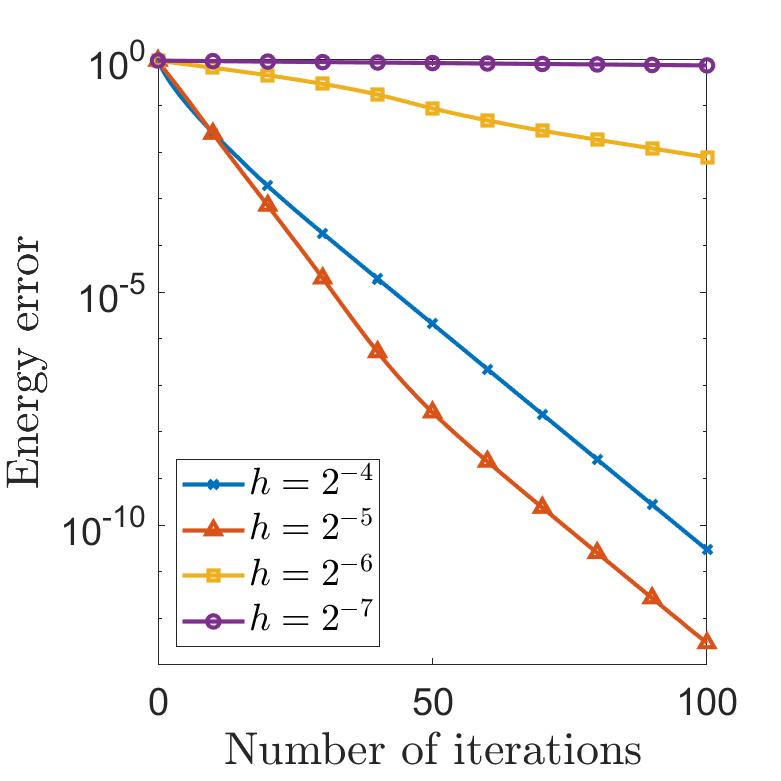} } \\
\subfloat[][Two-level, $\delta/h = 2^0$]{ \includegraphics[width=0.31\linewidth]{./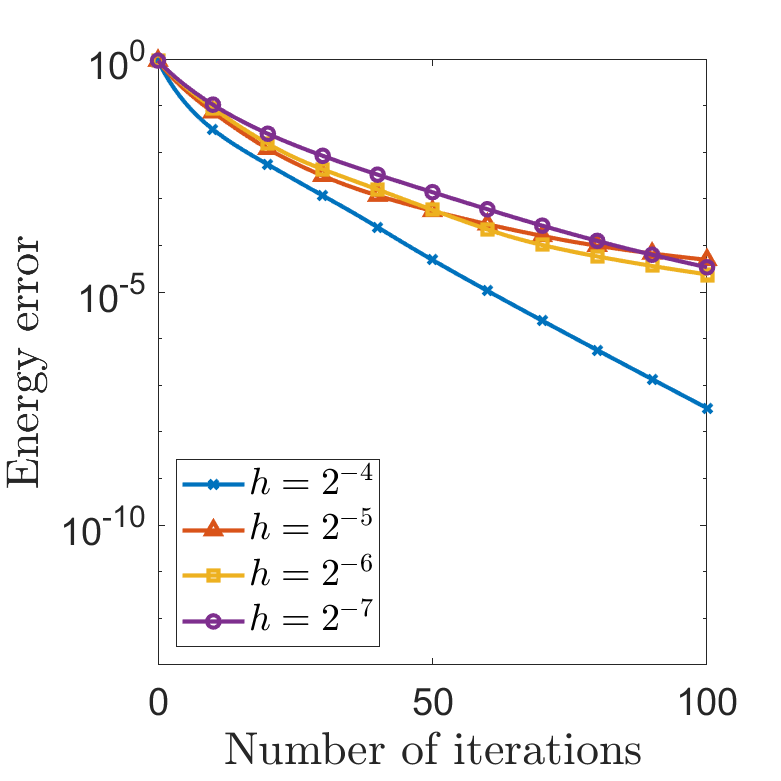} }
\subfloat[][Two-level, $\delta/h = 2^1$]{ \includegraphics[width=0.31\linewidth]{./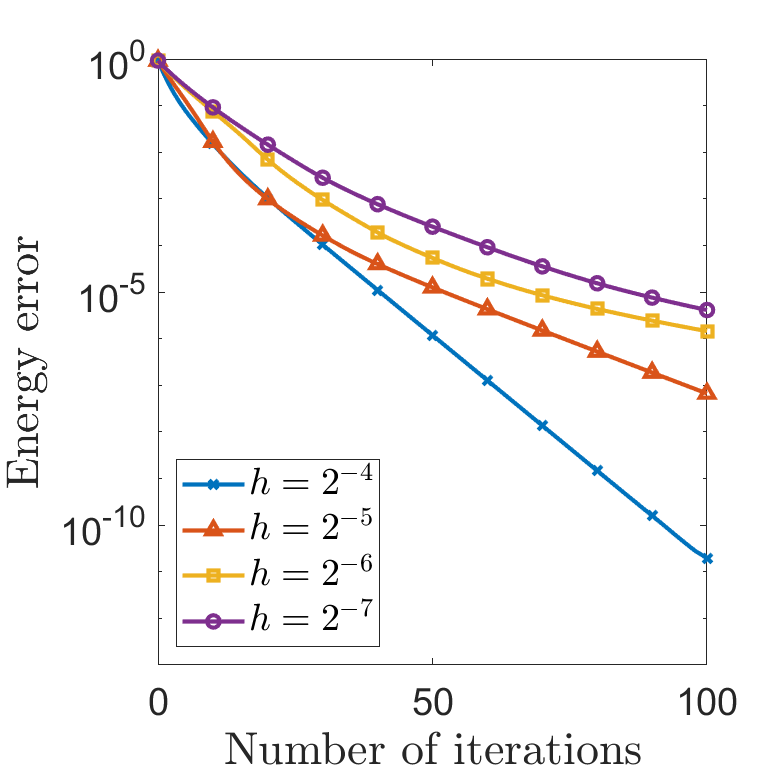} }
\subfloat[][Two-level, $\delta/h = 2^2$]{ \includegraphics[width=0.31\linewidth]{./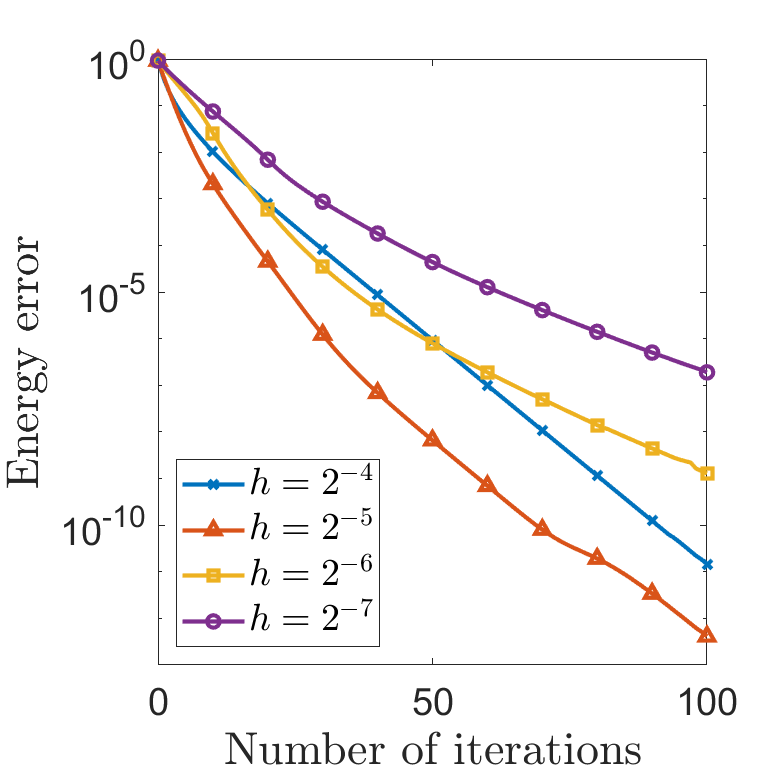} }
\caption{Decay of the relative energy error $\frac{F_h (\un) - F_h(u_h)}{| F_h (u_h) |}$ in the one- and two-level additive Schwarz methods for the elliptic distributed optimal control problem~\eqref{OC}.
The subdomain size $H$ and the mesh size $h$ vary satisfying $H/h = 2^3$.}
\label{Fig:OC}
\end{figure}

Next, we deal with the elliptic distributed optimal control problem~\eqref{OC}.
We consider an example appeared in~\cite{BDS:2014,BST:2021,LGY:2009}; we set $\beta = 10^{-4}$, $f(x_1, x_2) = \sin ( 4\pi x_1 x_2 ) + 1.5$, and $\psi = 1$.
Figure~\ref{Fig:OC} plots the relative energy error $\frac{F_h (\un) - F_h(u_h)}{| F_h (u_h) |}$  in both the one- and two-level additive Schwarz methods, with varying $H$, $h$, and $\delta$ satisfying $H/h = 2^3$.
The convergence behavior of these methods mirrors that of the displacement obstacle problem for clamped plates.
Specifically, while the convergence rate of the one-level method sharply declines as $h$ decreases, the two-level method exhibits rapid decay of the energy error even with larger values of $h$.
This verifies the effectiveness of the proposed coarse space in accelerating the convergence of additive Schwarz methods.

\section{Proof of Lemma~\ref{Lem:J_i}}
\label{Sec:J_i}
This section is devoted to the proof of Lemma~\ref{Lem:J_i}.
We begin by introducing some technical tools. Lemma~\ref{Lem:H1} presents elementary vector inequalities.

\begin{lemma}
\label{Lem:H1}
Let $e_1$ and $e_2$ be nonparallel unit vectors in $\mathbb{R}^2$, and let $\alpha$ be the angle formed by $e_1$ and $e_2$.
Then, for $p \in \mathbb{R}^2$, we have
\begin{equation*}
\frac{1}{2} \left( ( p \cdot e_1 )^2 + ( p \cdot e_2 )^2 \right)
\leq
|  p |^2
\leq \frac{3}{\sin^2 \alpha} \left( ( p \cdot e_1 )^2 + ( p \cdot e_2 )^2 \right).
\end{equation*}
\end{lemma}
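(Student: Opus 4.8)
The plan is to reduce the two-dimensional bound to a purely algebraic computation in an orthonormal basis. First I would fix an orthonormal frame, say the standard basis, and write $p = p_1 \mathbf{e}_x + p_2 \mathbf{e}_y$, so that $|p|^2 = p_1^2 + p_2^2$. The quantities $p \cdot e_1$ and $p \cdot e_2$ are then linear combinations of $p_1, p_2$ determined by the angles that $e_1, e_2$ make with the axes. The goal is to show that the symmetric bilinear form $q(p) := (p\cdot e_1)^2 + (p \cdot e_2)^2$ is comparable to $|p|^2$ with the explicit constants claimed.

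For the lower bound on $|p|^2$ (equivalently, the upper bound $q(p) \le 2|p|^2$), I would simply use Cauchy--Schwarz: since $e_1, e_2$ are unit vectors, $(p \cdot e_i)^2 \le |p|^2$, and summing the two gives $q(p) \le 2|p|^2$, i.e. $\tfrac12 q(p) \le |p|^2$. This is the easy half and requires nothing about $\alpha$.

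For the upper bound on $|p|^2$ in terms of $q(p)$, I would analyze the matrix $M = e_1 e_1^{\rT} + e_2 e_2^{\rT}$, so that $q(p) = p^{\rT} M p$, and bound $|p|^2 \le \lambda_{\min}(M)^{-1} q(p)$. The trace of $M$ is $2$ and its determinant is $\det[e_1, e_2]^2 = \sin^2\alpha$ (the Gram-type identity for the two rank-one pieces), so the eigenvalues $\lambda_1 \ge \lambda_2$ satisfy $\lambda_1 + \lambda_2 = 2$ and $\lambda_1 \lambda_2 = \sin^2\alpha$. Hence $\lambda_2 = \sin^2\alpha/\lambda_1 \ge \sin^2\alpha / 2$, which would already give $|p|^2 \le (2/\sin^2\alpha) q(p)$; the factor $3$ in the statement is simply a convenient slack that absorbs this (one could alternatively decompose $p$ into its $e_1$-component and the orthogonal remainder and estimate directly). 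I do not anticipate a genuine obstacle here — the only mild care needed is the computation of $\det M = \sin^2 \alpha$, which follows from expanding $\det(e_1 e_1^{\rT} + e_2 e_2^{\rT})$ and using $|e_1| = |e_2| = 1$ and $e_1 \cdot e_2 = \cos\alpha$, giving $\det M = 1 - \cos^2\alpha = \sin^2\alpha$. With the eigenvalue bound in hand, both inequalities follow immediately.
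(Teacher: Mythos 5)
Your argument is correct, and in fact establishes the sharper bound $|p|^2 \le \tfrac{2}{\sin^2\alpha}\bigl((p\cdot e_1)^2 + (p\cdot e_2)^2\bigr)$, which of course implies the stated inequality with constant $3$. The paper itself offers no proof beyond the remark that the lemma is elementary, so there is nothing to compare against; your reduction to the eigenvalues of $M = e_1 e_1^{\mathrm{T}} + e_2 e_2^{\mathrm{T}}$, using $\operatorname{tr} M = 2$ and $\det M = \det[e_1\,|\,e_2]^2 = \sin^2\alpha$ to get $\lambda_{\min}(M) = 1 - |\cos\alpha| \ge \tfrac{1}{2}\sin^2\alpha$, together with the Cauchy--Schwarz step for the other direction, is a complete and clean way to fill that gap.
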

\begin{proof}
It is elementary.
\end{proof}

Next, we present a discrete Poincar\'{e}--Friedrichs-type inequality for $C^1$ conforming finite element functions.

\begin{lemma}
\label{Lem:PF}
For $v \in \tS_h$ and a region $\omega_i$, $i \in \cI_H$,  suppose that there exists a point $p^0 \in \overline{\omega}_i$ such that $v (p^0) = 0$.
In addition, suppose that there exist nonparallel unit vectors $e_1, e_2 \in \mathbb{R}^2$ such that $\nabla v (p^1) \cdot e_1 = \nabla v (p^2) \cdot e_2 = 0$ for some $p^1, p^2 \in \overline{\omega}_i$.
Then we have
\begin{equation*}
\| v \|_{L^2 (\omega_i)}
\lesssim H | v |_{H^1 (\omega_i)} + H^2 | v |_{H^2 (\omega_i)}
\lesssim  \frac{H^2}{| \sin \alpha |} \left( 1 + \log \frac{H}{h} \right)^\frac{1}{2} | v |_{H^2 (\omega_i)},
\end{equation*}
where $\alpha$ is the angle formed by $e_1$ and $e_2$.
\end{lemma}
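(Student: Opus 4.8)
The plan is to prove a Poincar\'{e}--Friedrichs-type inequality for $v \in \tS_h$ on the convex region $\omega_i$, where $\diam \omega_i \approx H$, assuming a zero of $v$ at some $p^0$ and zeros of two directional derivatives $\nabla v \cdot e_1$, $\nabla v \cdot e_2$ at points $p^1$, $p^2$. The first inequality, $\| v \|_{L^2(\omega_i)} \lesssim H | v |_{H^1(\omega_i)} + H^2 | v |_{H^2(\omega_i)}$, I would obtain in two stages. Stage one: since $v(p^0) = 0$ and $\omega_i$ is convex with diameter $\approx H$, a standard Poincar\'{e} inequality along segments from $p^0$ (integrating $\nabla v$ over rays, then over $\omega_i$) gives $\| v \|_{L^2(\omega_i)} \lesssim H | v |_{H^1(\omega_i)} + \| \nabla v \|_{L^2(\omega_i)} \cdot (\text{something})$; more carefully one writes $v(x) = v(p^0) + \int_0^1 \nabla v(p^0 + t(x-p^0)) \cdot (x - p^0)\, dt$, squares, integrates in $x$ over $\omega_i$, and uses convexity to bound the resulting integral, yielding $\| v \|_{L^2(\omega_i)} \lesssim H | v |_{H^1(\omega_i)}$. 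Wait --- that already gives the first bound with only the $H | v |_{H^1}$ term, so I would keep the $H^2 | v |_{H^2}$ term only as a harmless additive slack (it is $\lesssim$ the right side anyway), and in fact the honest statement is $\| v \|_{L^2(\omega_i)} \lesssim H | v |_{H^1(\omega_i)}$.

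Stage two is the heart: bounding $| v |_{H^1(\omega_i)}$ in terms of $| v |_{H^2(\omega_i)}$ using the two directional-derivative zeros. Apply Lemma~\ref{Lem:H1} pointwise to $p = \nabla v(x)$ with the unit vectors $e_1, e_2$: then
\[
| \nabla v(x) |^2 \leq \frac{3}{\sin^2 \alpha}\bigl( (\nabla v(x) \cdot e_1)^2 + (\nabla v(x) \cdot e_2)^2 \bigr),
\]
so $| v |_{H^1(\omega_i)}^2 \lesssim \frac{1}{\sin^2 \alpha}\bigl( \| \nabla v \cdot e_1 \|_{L^2(\omega_i)}^2 + \| \nabla v \cdot e_2 \|_{L^2(\omega_i)}^2 \bigr)$. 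Now $g_1 := \nabla v \cdot e_1$ is a scalar function with $g_1(p^1) = 0$, and its gradient satisfies $| \nabla g_1 | \leq | \nabla^2 v |$ pointwise; similarly for $g_2$. One then wants a Poincar\'{e} inequality $\| g_j \|_{L^2(\omega_i)} \lesssim H | g_j |_{H^1(\omega_i)}$ from a single-point zero. The subtlety --- and this is where I expect the main obstacle --- is that $g_1 = \nabla v \cdot e_1$ is not a finite element function in $\tS_h$ (it is a derivative of one), so its regularity is only $H^1$ of $\omega_i$, and the naive Poincar\'{e}-from-a-point argument requires $g_1$ to be at least continuous; moreover the point $p^1$ may be interior with no control on a neighborhood. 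This is presumably why the logarithmic factor $(1 + \log(H/h))^{1/2}$ appears: one cannot get a clean point-value Poincar\'{e} inequality for an $H^1$ function, so instead one exploits that $v \in \tS_h$ is piecewise polynomial on the fine mesh $\cT_h$, uses a discrete/inverse estimate together with the fact that $g_j$ vanishes at $p^j$ to control $g_j$ on the fine element containing $p^j$, and then propagates via a chain of fine elements --- the number of elements in a chain across $\omega_i$ is $\approx H/h$, and combining discrete Sobolev embedding ($L^\infty$ vs.\ $H^1$ in 2D costs a $\log$) with a telescoping argument produces the $(1 + \log(H/h))^{1/2}$ factor. Concretely, I would invoke the standard discrete Sobolev inequality $\| w \|_{L^\infty(\omega_i)}^2 \lesssim (1 + \log(H/h)) \| w \|_{H^1(\omega_i)}^2$ for piecewise-polynomial $w$ on $\cT_h$ restricted to $\omega_i$ (after scaling), applied to $w = g_j - \bar{g_j}$ with $\bar{g_j}$ the mean; since $g_j(p^j) = 0$ this bounds $| \bar{g_j} |$ and hence $\| g_j \|_{L^2(\omega_i)}$, by $\lesssim H(1 + \log(H/h))^{1/2} | g_j |_{H^1(\omega_i)} \lesssim H(1 + \log(H/h))^{1/2} | v |_{H^2(\omega_i)}$.

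Assembling: from Stage two, $| v |_{H^1(\omega_i)} \lesssim \frac{H}{| \sin \alpha |}(1 + \log(H/h))^{1/2} | v |_{H^2(\omega_i)}$; feeding this into Stage one gives $\| v \|_{L^2(\omega_i)} \lesssim \frac{H^2}{| \sin \alpha |}(1 + \log(H/h))^{1/2} | v |_{H^2(\omega_i)}$, and the $H^2 | v |_{H^2(\omega_i)}$ term in the middle expression is trivially dominated by the right-hand side since $(1 + \log(H/h))^{1/2}/| \sin \alpha | \gtrsim 1$. Throughout I would work on the scaled reference region $\widehat{\omega}_i = \omega_i / H$ (diameter $\approx 1$, fine mesh size $\approx h/H$) so that all the Poincar\'{e}, Sobolev, and inverse constants are absolute, then scale back, tracking the powers of $H$. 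The main obstacle, to reiterate, is making the passage from $H^1$-control to point-value-control for the derivative functions $g_j = \nabla v \cdot e_j$ rigorous --- this is exactly where the discrete structure of $\tS_h$ on $\cT_h$ and the logarithmic factor enter, and where one must be careful that $p^j \in \overline{\omega}_i$ lies in (the closure of) some fine element on which $g_j$ is a polynomial, so that $g_j(p^j) = 0$ translates into an honest $L^\infty$ bound on that element via norm equivalence on polynomials of bounded degree.
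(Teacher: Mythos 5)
Your overall architecture matches the paper's: use Lemma~\ref{Lem:H1} to bound $|v|_{H^1(\omega_i)}$ by the two directional derivatives $\nabla v \cdot e_j$, then a discrete Poincar\'{e}--Friedrichs inequality with the logarithmic factor (the paper cites~\cite{BL:2005}) to bound $\|\nabla v\cdot e_j\|_{L^2(\omega_i)}$ by $H(1+\log(H/h))^{1/2}|v|_{H^2(\omega_i)}$, and combine. Your Stage~2 is essentially the paper's argument and your discussion of why $g_j$ being only $H^1$ forces the discrete/logarithmic route is exactly right.

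The genuine gap is in Stage~1. Your ray-integral argument for $\|v\|_{L^2(\omega_i)} \lesssim H|v|_{H^1(\omega_i)}$ does not go through: after writing $v(x) = \int_0^1 \nabla v(p^0 + t(x-p^0))\cdot(x-p^0)\,dt$, squaring, integrating over $\omega_i$, and changing variables $s = tr$ in polar coordinates centered at $p^0$, the $t$-integral produces a factor $1/s$ relative to the planar measure $s\,ds\,d\theta$, which is not integrable near $s=0$. More fundamentally, the inequality $\|v\|_{L^2}\lesssim H|v|_{H^1}$ from a single point zero \emph{cannot} hold for $H^1$ functions in two dimensions, because point evaluation is not a bounded functional on $H^1(\omega_i)$; the ``honest statement'' you propose is false, and the $H^2|v|_{H^2(\omega_i)}$ term in the first inequality of the lemma is not slack --- it is load-bearing. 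The paper's Stage~1 instead sets $c = |\omega_i|^{-1}\int_{\omega_i} v$, observes $v(p^0)=0$ gives $|c|\le \|v-c\|_{L^\infty(\omega_i)}$, invokes the scaled Sobolev inequality $H^2\hookrightarrow L^\infty$ (which is where the $H^2$ term enters, necessarily), and then uses the ordinary mean-value Poincar\'{e} inequality on $v-c$. If you replace your ray argument with this mean-plus-$L^\infty$ argument, the rest of your proposal correctly reproduces the paper's proof.
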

\begin{proof}
Let $c = \frac{1}{|\omega_i|} \int_{\omega_i} v \,dx$.
One can easily check that $v (p^0) = 0$ implies that $|c| \leq \| v - c \|_{L^{\infty} (\omega_i)}$~(cf.~\cite[equation~(3.22)]{BPS:1986}).
Since $\tS_h \subset H^2 (\Omega)$, by combining the Sobolev inequality~\cite[Theorem~1.4.6]{BS:2008} and a scaling argument~\cite[Section~3.4]{TW:2005}, we obtain
\begin{equation*}
\begin{split}
    | c | &\leq \| v - c \|_{L^{\infty}(\omega_i)} \\
    &\lesssim H^{-1} \left( \| v - c \|_{L^2 (\omega_i)} +  H | v - c |_{H^1 (\omega_i)} + H^2 | v - c |_{H^2 (\omega_i)} \right) \\
    &= H^{-1} \left( \| v - c \|_{L^2 (\omega_i)} + H | v |_{H^1 (\omega_i)} + H^2 | v |_{H^2 (\omega_i)} \right).
\end{split}
\end{equation*}
Then it follows that
\begin{equation} \begin{split}
\label{Lem:PF_1}
    \| v \|_{L^2 (\omega_i)} &\leq \| v - c \|_{L^2 (\omega_i)} + \| c \|_{L^2 (\omega_i)} \\
    &\lesssim \| v - c \|_{L^2 (\omega_i)} + H | c | \\
    &\lesssim \| v - c \|_{L^2 (\omega_i)} + H | v |_{H^1 (\omega_i)} + H^2 | v |_{H^2 (\omega_i)} \\
    &\lesssim H | v |_{H^1 (\omega_i)} + H^2 | v |_{H^2 (\omega_i)},
\end{split} \end{equation}
where the last inequality is due to the Poincar\'{e}--Friedrichs inequality.
Meanwhile, invoking the discrete Poincar\'{e}--Friedrichs inequality presented in~\cite[Lemma~5.1]{BL:2005} and Lemma~\ref{Lem:H1}, we get
\begin{equation} \begin{split}
\label{Lem:PF_2}
&\| \nabla v \cdot e_1 \|_{L^2 (\omega_i)}^2 + \| \nabla v \cdot e_2 \|_{L^2 (\omega_i)}^2 \\
&\lesssim H^2 \left( 1 + \log \frac{H}{h} \right) \left( \| \nabla ( \nabla v \cdot e_1 ) \|_{L^2 (\omega_i)}^2 + \| \nabla ( \nabla v  \cdot e_2) \|_{L^2 (\omega_i)}^2 \right) \\
&\lesssim H^2 \left( 1 + \log \frac{H}{h} \right) | v |_{H^2 (\omega_i)}^2. 
\end{split} \end{equation}
Combining~\eqref{Lem:PF_1}, Lemma~\ref{Lem:H1}, and~\eqref{Lem:PF_2} yields
\begin{equation*} \begin{split}
\| v \|_{L^2 (\omega_i)}^2 &\lesssim H^2 | v |_{H^1 (\omega_i)}^2 + H^4 | v |_{H^2 (\omega_i)}^2 \\
&\lesssim \frac{H^2}{\sin^2 \alpha} \left( \| \nabla v \cdot e_1 \|_{L^2 (\omega_i)}^2 + \| \nabla v \cdot e_2 \|_{L^2 (\omega_i)}^2 \right) + H^4 | v |_{H^2 (\omega_i)}^2 \\
& \lesssim \frac{H^4}{\sin^2 \alpha} \left( 1 + \log \frac{H}{h} \right) | v |_{H^2 (\omega_i)}^2,
\end{split} \end{equation*}
which is our desired result.
\end{proof}

In what follows, we write the collection of all $\cT_h$-vertices in $\overline{\omega}_i$ as $\cV_h^i$.
In addition, we introduce the notion of \textit{$\alpha$-biasedness} in Definition~\ref{Def:biased}.

\begin{definition}
\label{Def:biased}
For $\alpha \in [0, \pi/2)$ and a region $\omega_i$, $i \in \cI_H$, we say that a function $v \in \tS_h$ is $\alpha$-biased on $\omega_i$ if, for any nonparallel unit vectors $e_1, e_2 \in \mathbb{R}^2$ such that $\nabla v (p^1) \cdot e_1 =\nabla v (p^2) \cdot e_2 = 0$ for some $p^1, p^2 \in \overline{\omega}_i$, the acute angle formed by $e_1$ and $e_2$ does not exceed $\alpha / 2$.
\end{definition}

If we set $\alpha = 0$ in Definition~\ref{Def:biased}, then this implies that the gradient of $u$ is parallel at every point in $\overline{\omega}_i$.
Hence, $\alpha$-biasedness intuitively suggests that $u$ is nearly constant along some direction and varies predominantly along its perpendicular direction.
The following lemma summarizes a useful property of $\alpha$-biased functions, which plays an important role in the proof of Lemma~\ref{Lem:J_i}.

\begin{lemma}
\label{Lem:linear}
Suppose that Assumption~\ref{Ass:cT} holds.
For any $\alpha (H,h)$-biased function $v \in \tS_h$ on a region $\omega_i$, $i \in \cI_H$, there exists a linear function $\ell \in \mathbb{P}_1 (\overline{\omega}_i)$ that satisfies the following:
\begin{subequations}
\begin{align}
\label{Lem:linear_1}
\ell (p^1) = v (p^1) \quad \text{ for some } p^1 \in \cV_h^i, \\
\label{Lem:linear_2}
\begin{cases}
    \displaystyle 0 \leq \ell \leq v, &  \text{ if } v > 0, \\
    \displaystyle v \leq \ell \leq 0, &  \text{ if } v < 0, \\
    \displaystyle \ell = 0, & \text{ if } v = 0,
\end{cases}
 \quad \text{ on } \cV_h^i.
 \end{align}
\end{subequations}
\end{lemma}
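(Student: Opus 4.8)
The plan is to exploit $\alpha(H,h)$-biasedness to reduce the construction to an essentially one-dimensional problem. First I would fix a region $\omega_i$ and an $\alpha(H,h)$-biased function $v \in \tS_h$. Consider the set of directions $e$ for which $\nabla v (p) \cdot e = 0$ for some $p \in \overline{\omega}_i$; by $\alpha(H,h)$-biasedness, any two such directions form an acute angle at most $\alpha(H,h)/2$, so all of them lie within a single narrow cone. Pick a unit vector $d$ roughly bisecting this cone (or, if the set of such directions is empty or degenerate, any fixed direction will do); then $\nabla v \cdot d$ is bounded away from zero on all of $\overline{\omega}_i$ — more precisely, $\nabla v \cdot d$ does not change sign on $\overline{\omega}_i$, since a sign change along a path would force $\nabla v \cdot d$ to vanish somewhere, contradicting the choice of $d$ relative to the cone of admissible directions. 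Let $n$ be the unit vector perpendicular to $d$. The upshot is that $v$ is strictly monotone along the $d$-direction, so $v$ behaves like a one-variable function of the coordinate $t = x \cdot d$.

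Next I would construct the linear function $\ell$. Since $v$ is monotone along $d$, for each fixed value of $s = x \cdot n$ the function $t \mapsto v(t,s)$ crosses each level at most once; in particular the restriction of $v$ to $\cV_h^i$ is controlled by its behavior along $d$. I would look for $\ell$ of the form $\ell(x) = \mu \, (x \cdot d) + \text{const}$, i.e., a linear function depending only on the $d$-coordinate (this is natural because $v$ is nearly constant along $n$). The slope $\mu$ and the constant should be chosen so that the graph of $\ell$ stays "below" the graph of $v$ in the appropriate sense at all vertices of $\cV_h^i$ when $v > 0$ (and symmetrically when $v < 0$), while touching $v$ at some vertex $p^1 \in \cV_h^i$. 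Concretely: among all $\cT_h$-vertices in $\overline{\omega}_i$, for the case $v > 0$, one can take $\ell$ to be the linear function through the origin-shifted graph that is the "lower envelope" in the $t$-direction — choose $\mu$ to be the appropriate one-sided slope and adjust the constant downward until $\ell \le v$ at every vertex, with equality at the vertex $p^1$ realizing the minimum gap; monotonicity of $v$ along $d$ together with Assumption~\ref{Ass:cT}(b) (which guarantees vertices are not all collinear and angles are quantitatively bounded, so the one-dimensional reduction does not collapse) ensures such an $\ell$ exists and is nonnegative on $\cV_h^i$ (using $v > 0$ there and that $\ell$ lies between $0$ and $v$). The case $v < 0$ is symmetric, and $v = 0$ forces $\ell \equiv 0$ by taking $\mu = 0$ and constant $0$.

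The main obstacle I anticipate is making rigorous the claim that $\ell$ can be taken to \emph{simultaneously} satisfy $0 \le \ell \le v$ at \emph{all} vertices of $\cV_h^i$, not merely along a single line of vertices. This is where the "nearly constant along $n$" heuristic must be converted into a genuine inequality: one needs to bound the variation of $v$ in the $n$-direction (controlled by $|\nabla v \cdot n|$, which is small relative to $|\nabla v \cdot d|$ precisely because of $\alpha$-biasedness and Lemma~\ref{Lem:H1}) and show it is dominated by the monotone growth in the $d$-direction across the spacing $h$ of fine vertices, with the angle bound $\sin\alpha(H,h)$ entering the constant. I expect this to require a careful case analysis on the sign pattern of $v$ over $\overline{\omega}_i$ (since $v > 0$, $v < 0$, and $v = 0$ regions can coexist — though connectedness of $\omega_i$ from Assumption~\ref{Ass:cT}(a) and monotonicity along $d$ restrict the possible configurations), together with an elementary but delicate argument comparing the secant slopes of $v$ between pairs of vertices. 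I would defer the quantitative approximation bound \eqref{Lem2:J_i} and the boundary conditions \eqref{J_i_BC} to the subsequent assembly of $J_i$ in the full proof of Lemma~\ref{Lem:J_i}; here the only goal is existence of $\ell$ with properties \eqref{Lem:linear_1}--\eqref{Lem:linear_2}.
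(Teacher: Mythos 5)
Your high-level idea — reduce to a one-dimensional problem by exploiting that $\alpha$-biasedness confines the ``degenerate'' directions $e$ (those with $\nabla v(p) \cdot e = 0$) to a narrow cone, and then take $\ell$ to be a linear function of the coordinate transverse to that cone — is exactly the paper's strategy. However, there are two problems, one small and one essential.

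The small problem is a directional slip: you propose picking $d$ to bisect the cone of degenerate directions and then claim $\nabla v \cdot d$ is bounded away from zero. That is backwards: if $d$ lies inside that cone, then $\nabla v \cdot d$ is small (indeed vanishes at some point). You want $d$ transverse to the cone — i.e., roughly aligned with $\nabla v$ — which is what the paper does by taking $e_L$ to be the \emph{normal} of a line $L$ contained in the degenerate cone.

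The essential problem is the one you flag yourself and then defer: you need $\ell$ to have the \emph{same sign} as $v$ at every vertex of $\cV_h^i$, not merely $\ell \leq v$ at the positive vertices. ``Adjust the constant downward until $\ell \leq v$ at every vertex'' fails here, because shifting $\ell$ down can make it negative at a vertex where $v$ is small and positive; you would then violate the first case of \eqref{Lem:linear_2}. The paper resolves this with a construction you are missing: it anchors at $p^0 \in \argmin_{\cV_h^i} |v|$ and builds $\ell$ to \emph{vanish} at $p^0$, namely $\ell(x) = \mu\,(x - p^0)\cdot e_L$ with $\mu = \min_{p \in \cV_h^i \setminus L} v(p)/((p-p^0)\cdot e_L) > 0$. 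Then $\ell(p)$ automatically has the same sign as $(p - p^0)\cdot e_L$, and the heart of the proof is the \emph{separation claim}: for every $p \in \cV_h^i \setminus L$, $(p - p^0)\cdot e_L$ has the same sign as $v(p)$. The paper proves this by first showing (via Assumption~\ref{Ass:cT}(b)) that the vertices inside the cone $T_\alpha$ at $p^0$ are collinear, giving the line $L$, and then combining the mean value theorem, convexity of $\omega_i$ (Assumption~\ref{Ass:cT}(a)), and a polar-coordinate sign analysis of $\nabla v \cdot e_\theta$. That argument is precisely the ``elementary but delicate'' step you anticipate needing but do not supply; without it the proposal is incomplete.
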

\begin{proof}
Throughout this proof, we write $\alpha = \alpha (H,h)$.
Let $p^0 \in \arg\min_{\cV_h^i} | v |$.
Since $v$ is $\alpha$-biased, we deduce that $\nabla v (p^0) \neq 0$, implying that the zero level set of $v - v (p^0)$ has a tangent line $T$ at $p^0$.
Let $T_{\alpha}$ be a double cone in $\overline{\omega}_i$ centered at $p^0$, with $T$ as its axis and aperture $\alpha$, i.e.,
\begin{equation*}
T_{\alpha} = \left\{ x \in \overline{\omega}_i : \text{ the angle between } \overline{xp^0} \text{ and } T \text{ does not exceed } \frac{\alpha}{2} \right\}.
\end{equation*}
We observe that all points in $\cV_h^i \cap T_{\alpha}$ must be collinear; otherwise, an acute angle at $p^0$ formed by noncollinear $\cT_h$-vertices is less than $\alpha$, which contradicts Assumption~\ref{Ass:cT}.
Hence, we have a line $L$ inside $T_{\alpha}$ that passes through all points in $\cV_h^i \cap T_{\alpha}$.
Let $e_L \in \mathbb{R}^2$ denote the unit normal vector of $L$ satisfying $\nabla v (p^0) \cdot e_L > 0$.
Now, we make the following assumption: for $p \in \cV_h^i \setminus L$, we have
\begin{equation}
\label{separate}
\begin{split}
&\text{if } v (p) > 0 , \quad \text{ then } (p - p^0) \cdot e_L > 0, \\
&\text{if } v (p) < 0 , \quad \text{ then } (p - p^0) \cdot e_L < 0.
\end{split}
\end{equation}
Under the assumption~\eqref{separate}, we define a linear function $\ell \in \mathbb{P}_1 (\overline{\omega}_i)$ as follows:
\begin{equation*}
\ell (x) = \left( \min_{p \in \cV_h^i \setminus L} \frac{v (p)}{(p - p^0) \cdot e_L} \right) (x-p^0) \cdot e_L, \quad x \in \overline{\omega}_i.
\end{equation*}
Then~\eqref{Lem:linear_1} holds with
\begin{equation*}
p^1 \in \argmin_{p \in \cV_h^i \setminus L} \frac{v (p)}{(p - p^0) \cdot e_L}.
\end{equation*}
Moreover, we readily obtain~\eqref{Lem:linear_2} from the definition of $\ell$ and~\eqref{separate}.

We still need to provide a proof of~\eqref{separate}.
We introduce the polar coordinate such that, the origin is $p^0$, the $x_1$-axis agrees with $T$, and $x_2$-axis has the same direction as $\nabla v (p^0)$.
In this coordinate, let $e_{\theta}$ denote the unit vector toward the angle $\theta \in \mathbb{R}$.
As the line $L$ is inside $T_{\alpha}$, it is parallel to $e_{\xi}$ for some $\xi \in [-\alpha/2, \alpha/2]$.
Since $v$ is $\alpha$-biased and $\nabla v (p^0) \cdot e_0 = 0$, we deduce that
\begin{equation*}
\nabla v (x) \cdot e_{\theta} \neq 0,
\quad x \in \overline{\omega}_i,
\gap \theta \in \left( \frac{\alpha}{2}, \pi - \frac{\alpha}{2} \right) \cup \left( \pi + \frac{\alpha}{2}, 2\pi - \frac{\alpha}{2} \right).
\end{equation*}
Moreover, since $\nabla v (p^0) \cdot e_{\pi/2} > 0$ and $\nabla v (p^0) \cdot e_{3\pi/2} < 0$, by the continuity of $\nabla v$, we have
\begin{equation}
\label{direction_sign}
\begin{cases}
\nabla v \cdot e_{\theta} > 0, & \text{ if } \theta \in \left( \frac{\alpha}{2}, \pi - \frac{\alpha}{2} \right), \\
\nabla v \cdot e_{\theta} < 0, & \text{ if } \theta \in  \left( \pi + \frac{\alpha}{2}, 2\pi - \frac{\alpha}{2} \right).
\end{cases}
\end{equation}
Now, take any $p \in \cV_h^i \setminus L$ such that $v (p) > 0$.
By the definition of $p^0$, we have $v (p) \geq v (p^0)$.
Moreover, by invoking the mean value theorem, convexity of $\omega_i$, and the $\alpha$-biasedness of $v$, we observe that $v (p) \neq v (p^0)$.
Hence, we deduce that $v (p) > v(p^0)$.
By invoking the mean value theorem and the convexity of $\omega_i$ again, we obtain
\begin{equation*}
    0 < v (p) - v (p^0) = \nabla v (p') \cdot (p - p^0)
\end{equation*}
for some $p' \in \omega_i$.
Then~\eqref{direction_sign} implies that the direction of $p - p^0$ aligns with  $e_{\eta}$ for some $\eta \in (\alpha/2 , \pi - \alpha/2)$.
Noting that $e_L = e_{\xi + \pi/2}$, the fact that the difference between $\eta$ and $\xi + \pi / 2$ is less than $\pi / 2$ implies that $(p - p^0) \cdot e_L > 0$.
Similarly, if we take any $p \in \cV_h^i \setminus L$ such that $v (p) < 0$, then we can prove that $(p - p^0) \cdot e_L < 0$, which completes the proof of~\eqref{separate}.
\end{proof}

Finally, we provide the proof of Lemma~\ref{Lem:J_i} in the following.

\begin{proof}[Proof of Lemma~\ref{Lem:J_i}]
Throughout this proof, we write $\alpha = \alpha (H,h)$.
If $v$ is not $\alpha$-biased on $\omega_i$, then we set
\begin{equation*}
J_i v = v (q), \quad q \in \argmin_{\overline{\omega}_i} | v |.
\end{equation*}
Otherwise, invoking Lemma~\ref{Lem:linear}, we obtain a linear function $\ell_1 \in \mathbb{P}_1 (\overline{\omega}_i)$ such that $( v - \ell_1) (p^1) = 0$ for some $p^1 \in \cV_h^i$ and
\begin{equation}
\label{Lem3:J_i}
\begin{cases}
    \displaystyle 0 \leq \ell_1 \leq v, &  \text{ if } v > 0, \\
    \displaystyle v \leq \ell_1 \leq 0, &  \text{ if } v < 0, \\
    \displaystyle \ell_1 = 0, & \text{ if } v = 0, \\
\end{cases}
 \quad \text{ on } \cV_h^i.
\end{equation}
If $v - \ell_1$ is not $\alpha$-biased on $\omega_i$, then we set $J_i v = \ell_1$.
Otherwise, invoking Lemma~\ref{Lem:linear} again, we can find a linear function $\ell_2 \in \mathbb{P}_1 (\overline{\omega}_i)$ such that $(v - \ell_1 - \ell_2) (p^1) = (v - \ell_1 - \ell_2) (p^2 ) = 0$ for some $p^2 \in \cV_h^i \setminus \{ p^1 \}$ and
\begin{equation}
\label{Lem4:J_i}
\begin{cases}
    \displaystyle 0 \leq \ell_2 \leq v - \ell_1, &  \text{ if } v - \ell_1 > 0, \\
    \displaystyle v - \ell_1 \leq \ell_2 \leq 0, &  \text{ if } v - \ell_1 < 0, \\
    \displaystyle \ell_2 = 0, & \text{ if } v - \ell_1 = 0, \\
\end{cases}
 \quad \text{ on } \cV_h^i.
\end{equation}
We repeat the above argument once again; if $v - \ell_1 - \ell_2$ is not $\alpha$-biased on $\omega_i$, then we set $J_i v = \ell_1 + \ell_2$.
Otherwise, using Lemma~\ref{Lem:linear} yields a linear function $\ell_3 \in \mathbb{P}_1 ( \overline{\omega}_i)$ such that $v - \ell_1 - \ell_2 - \ell_3$ vanishes at three noncollinear vertices $p^1, p^2$, and $p^3 \in \cV_h^i$ and
\begin{equation}
\label{Lem5:J_i}
\begin{cases}
    \displaystyle 0 \leq \ell_3 \leq v - \ell_1 - \ell_2, &  \text{ if } v - \ell_1 - \ell_2 > 0, \\
    \displaystyle v - \ell_1 - \ell_2 \leq \ell_3 \leq 0, &  \text{ if } v - \ell_1 - \ell_2 < 0, \\
    \displaystyle \ell_3 = 0, & \text{ if } v - \ell_1 - \ell_2 = 0, \\
\end{cases}
 \quad \text{ on } \cV_h^i.
\end{equation}
Note that, by Assumption~\ref{Ass:cT}, the sine of the angle formed by $p^1$, $p^2$, and $p^3$ is greater than or equal to $\sin \alpha$.
Hence, by invoking the mean value theorem and the convexity of $\omega_i$, we deduce that $v - \ell_1 - \ell_2 - \ell_3$ is not $\alpha$-biased on $\omega_i$.
Finally, we set $J_i v = \ell_1 + \ell_2 + \ell_3$.
In all cases,~\eqref{Lem1:J_i} is easily verified by using~\eqref{Lem3:J_i},~\eqref{Lem4:J_i}, and~\eqref{Lem5:J_i}.
In addition, invoking Lemma~\ref{Lem:PF} yields~\eqref{Lem2:J_i}.

It remains to verify~\eqref{J_i_BC}.
If $v = 0$ on $\Gamma$, then~\eqref{Lem1:J_i} ensures that $J_i v = 0$ on $\Gamma$.
On the other hand, $v = \frac{\partial v}{\partial \nu} = 0$ on $\Gamma$ implies that $v$ is not $\alpha$-biased on $\omega_i$, so that we have $J_i v$ vanishes on $\omega_i$.
\end{proof} 

\section{Conclusion}
\label{Sec:Conclusion}
In this paper, we conducted a rigorous convergence analysis of additive Schwarz methods for fourth-order variational inequalities.
We introduced a novel coarse space along with a corresponding nonlinear positive interpolation operator, which make the two-level method scalable.
This ensures that the convergence rate remains uniformly bounded when $H/h$ and $H/\delta$ are held constant.
Our theoretical results were verified through numerical experiments conducted on two examples: the displacement obstacle problem of clamped plates and the elliptic distributed optimal control problem.

This paper leaves us an interesting topic for future research.
Although Theorem~\ref{Thm:2L} is sufficient to ensure the scalability of the two-level method, it is unclear whether the estimate in Theorem~\ref{Thm:2L} is optimal.
Investigating the sharpness of Theorem~\ref{Thm:2L} or seeking to refine the estimate may require the development of new mathematical tools, which could be explored in future work.

\section*{Acknowledgement}
This work was inspired by research discussions with Professor Chang-Ock Lee on efficient numerical methods for elliptic distributed optimal control problems.
The author wishes to express gratitude to him for his assistance in preparing this manuscript.

\bibliographystyle{siamplain}
\bibliography{refs_Schwarz_4VI}
\end{document}